\definecolor{Blue}{RGB}{0, 0, 250}
\definecolor{Green}{RGB}{0, 204, 0}
\DeclareMathOperator{\mod1}{\mathord{mod}}
\DeclareMathOperator{\rank}{\mathord{rank}}
\DeclareMathOperator{\dom}{\mathord{dom}}
\DeclareMathOperator{\Ee}{\mathord{E}}
\DeclareMathOperator{\Rr}{\mathrel\mathscr{R}}
\DeclareMathOperator{\Ll}{\mathrel\mathscr{L}}
\DeclareMathOperator{\Hh}{\mathrel\mathscr{H}}
\DeclareMathOperator{\Jj}{\mathrel\mathscr{J}}
\DeclareMathOperator{\Dd}{\mathrel\mathscr{D}}
\DeclareMathOperator{\Kk}{\mathrel\mathcal{K}}
\DeclareMathOperator{\Rrr}{\mathord{R}}
\DeclareMathOperator{\Lll}{\mathord{L}}
\DeclareMathOperator{\Hhh}{\mathord{H}}
\DeclareMathOperator{\Jjj}{\mathord{J}}
\DeclareMathOperator{\Ddd}{\mathord{D}}
\DeclareMathOperator{\Kkk}{\mathord{K}}
\DeclareMathOperator{\Ss}{\mathord{S}}
\DeclareMathOperator{\Tt}{\mathcal{T}}
\DeclareMathOperator{\Pt}{\mathcal{PT}}
\DeclareMathOperator{\id}{\mathord{id}}
\DeclareMathOperator{\Reg}{\mathord{Reg}}
\DeclareMathOperator{\Pp}{\mathord{P}}
\DeclareMathOperator{\codom}{\mathord{codom}}
\DeclareMathOperator{\coker}{\mathord{coker}}
\DeclareMathOperator{\Nu}{\mathord{N_{U}}}
\DeclareMathOperator{\Nl}{\mathord{N_{L}}}
\DeclareMathOperator{\Bd}{\mathord{\mathcal{B}}}
\DeclareMathOperator{\PBd}{\mathord{\mathcal{PB}}}
\newcommand{\nc}{\newcommand}
\nc{\rnc}{\renewcommand}
\let\orgdescriptionlabel\descriptionlabel
\renewcommand*{\descriptionlabel}[1]{%
  \let\orglabel\label
  \let\label\@gobble
  \phantomsection
  \edef\@currentlabel{#1}%
  \let\label\orglabel
  \orgdescriptionlabel{#1}%
}
\nc{\uvert}[1]{\fill (#1,1.5)circle(.2);}
\rnc{\lvert}[1]{\fill (#1,0)circle(.2);}
\nc{\disppartnx}[3]{{\lower1.0 ex\hbox{\begin{tikzpicture}[scale=0.3]
\foreach \x in {1,...,#1}
{ \uvert{\x}  }
\foreach \y in {1,...,#2}
{ \lvert{\y} }
#3 \end{tikzpicture}}}}
\nc{\darcx}[3]{\draw(#1,0)arc(180:90:#3) (#1+#3,#3)--(#2-#3,#3) (#2-#3,#3) arc(90:0:#3);}
\nc{\uarcx}[3]{\draw(#1,1.5)arc(180:270:#3) (#1+#3,1.5-#3)--(#2-#3,1.5-#3) (#2-#3,1.5-#3) arc(270:360:#3);}
\author{Ivana Đurđev Brković}
\affil{\emph{Mathematical Institute of the Serbian Academy of Sciences and Arts}\\
\emph{Kneza Mihaila 36, 11000 Beograd, Serbia}\\
\href{mailto:ivana.djurdjev@mi.sanu.ac.rs}{ivana.djurdjev@mi.sanu.ac.rs}}
\date{}
\title{Classification of variants of partial Brauer monoids}
\theoremstyle{plain}
 \newtheorem{thm}{Theorem}[section]
 \newtheorem{prop}{Proposition}[section]
 \newtheorem{lem}{Lemma}[section]
 \newtheorem{hyp}{Hypothesis}[section]
\theoremstyle{definition}
 \newtheorem{exm}{Example}[section]
\theoremstyle{remark}
 \newtheorem{rem}{Remark}[section]
\begin{document}

\maketitle


\begin{abstract}

A \emph{variant} of a semigroup $S$ with respect to an element $a\in S$ is the semigroup $S^{a}=(S,\star_{a})$, where $x\star_{a}y=xay$ for any $x,y\in S$. Here, $a$ is the \emph{sandwich element} of $S^{a}$. In this article, we study variants of the partial Brauer monoid $\mathcal{PB}_{n}$ for $n\in\mathbb{N}$. We give the classification of these variants in the case when the rank of the sandwich element is nonzero.

\vspace{0.2cm}
\textbf{Keywords:} Variants, partial Brauer monoids, partition monoids, classification.

\vspace{0.2cm}
\textbf{MSC:} 20M20, 20M10, 20M17,05E16

\end{abstract}


\section{Introduction}

The idea of a sandwich operation is a natural one in semigroup theory. Namely, it arises in relation to Rees matrix semigroups, which are the building blocks of finite semigroups. Sandwich semigroups - semigroups defined by such an operation, first appeared in Lyapin's 1960 monograph \cite{Lyapin}. However, it was not until the 1980's that variants were named and investigated by Hickey \cite{Hickey1,Hickey2}. He used them to provide a natural interpretation of the famed Nambooripad's partial order \cite{Nambooripad} on a regular semigroup. In 2001, Khan and Lawson found another application: they used variants as a means for introducing an alternative to the group of units in some classes of non-monoidal regular semigroups \cite{Khan}. This was followed by a number of articles on the topic of variants and sandwich semigroups \cite{SandVar2,SandVar4,SandVar6,SandVar8,Tsyaputa1,Tsyaputa2,SandwichI,SandwichII,SandwichIII,Variants}, and a chapter in the monograph \cite{GaylMaz}. These results proved to be applicable in other fields as well, as sandwich operations naturally arise in representation theory \cite{Repr1,Repr2}, category theory \cite{CatThe}, topology \cite{Top1,Top2}, automata theory \cite{AutThe1,AutThe2}, classical groups \cite{ClGroups}, computational algebra \cite{CompAlg}, and more.

In this article, the theme of variants is combined with another theme - partitions and diagrams (visual representations of partitions). Diagram categories and algebras are ubiquitous in representation theory \cite{PartRepTh1,PartRepTh2}, statistical mechanics \cite{stMech1,stMech2,stMech3,stMech4}, knot theory \cite{knTh1,knTh2,knTh3,knTh4,knTh5} and more. Due to their importance and wide range of applications, they have induced significant interest, which has led to inception of important ideas, such as approaching diagram algebras via diagram monoids and twisted semigroup algebras \cite{TwAlg1,TwAlg2,TwAlg3,TwAlg4,TwAlg5,TwAlg6,PartRepTh1}. This was beneficial for the theory of semigroups as well, as diagram monoids have interesting structural and combinatorial properties. In terms of structure, they are closely related to certain transformation semigroups and are natural examples of regular $*$-semigroups. On the other hand, their combinatorial structure opened new directions for research in the area of combinatorial semigroup theory \cite{TwAlg2,Comb1,Comb2}.
In particular, partial Brauer algebras and monoids have recently received much attention \cite{TwAlg2,PartBr1,Comb2,PartBr2,PartBr4,PartBr5,PartBr6,PartBr7,PartBr9,PartBr10} and alongside their planar counterparts, Motzkin algebras and monoids, are an emerging topic in current research efforts.

Classification of variants of a semigroup is a natural goal, one that presents itself because of the very definition of a variant. In 2003 and 2004 respectively, Tsyaputa classified variants of the full transformation monoid over a finite set $\Tt_{n}$ \cite{Tsyaputa1}, and its counterpart containing partial maps, $\Pt_{n}$ \cite{Tsyaputa2}. In 2018, Dolinka and East classified sandwich semigroups of linear transformations \cite{DolinkaEastMat}, thereby covering variants of $\mathcal{M}_{n}(\mathbb{F})$ (the semigroup of all $n\times n$ matrices over a field $\mathbb{F}$), as well. Finally, in \cite{SandwichIII}, the authors classified sandwich semigroups of Brauer diagrams, which includes variants of the partial Brauer monoid $\Bd_{n}$, as well. In this article, we continue the theme by considering variants of the partial Brauer monoid $\PBd_{n}$.
The article is organised as follows. In Section \ref{s:Prelimi}, we introduce the notions and notation needed for understanding the rest of the article. In Section \ref{s:PartialBrauerMonoids}, we present the notions specific to partitions and diagrams, and we develop a toolbox for dealing with partial Brauer partitions. Section \ref{s:Variants} contains the crux of the article, where we present the combinatorial analysis from which we infer the recurrence relation describing the number of $\Ll$-classes ($\Rr$-classes) in a regular $\Dd$-class of a variant $\PBd^{\alpha}_{n}$. Furthermore, in Lemma \ref{l:muInEq}, we prove an inequality which is vital for proving the main result, Theorem \ref{t:IsomClass>0}, where we classify the variants of the form $\PBd^{\alpha}_{n}$, where the sandwich element $\alpha$ has non-zero rank. The remaining case, when the rank of the sandwich element is zero, is considered in Section \ref{s:rank_zero}.


\section{Preliminaries}
\label{s:Prelimi}

Let $S$ be a semigroup. Recall that Green's preorders on $S$ are defined, for $x,y\in S$ by
\begin{equation*}
    \begin{gathered}
        x\leq_{\Rr} y \Leftrightarrow  xS^{1}\subseteq yS^{1}, \hspace{1cm}
        x\leq_{\Ll} y \Leftrightarrow  S^{1}x\subseteq S^{1}y, \hspace{1cm}
        x\leq_{\Jj} y \Leftrightarrow  S^{1}xS^{1}\subseteq S^{1}yS^{1},\\
    \end{gathered}
\end{equation*}
where $S^{1}$ is the monoid obtained from $S$ by adjoining an identity element 1, if necessary. Then, Green's relations of $S$ are defined as follows: for $\Kk\in\{\Rr,\Ll,\Jj\}$, we define $\Kk=\leq_{\Kk}\cap\geq_{\Kk}$, and we combine these to obtain $\Hh=\Rr\cap\Ll$ and $\Dd=\Rr\circ\Ll=\Ll\circ\Rr$. These relations are clearly equivalences. For $x\in S$, and for $\Kk\in\{\Rr,\Ll,\Hh,\Dd,\Jj\}$, let $\Kkk_{x}$ denote the $\Kk$-class of $S$ containing $x$. 

For $T\subseteq S$, let $\Ee(T)=\{x\in T:x=x^{2}\}$ denote the set of all idempotents of $S$ that belong to $T$. An element $x\in S$ is \emph{regular} if $x=xyx$ and $y=yxy$ for some $y\in S$, and $\Reg(S)$ denotes the set of all regular elements in $S$. It is well-known that for $x\in\Reg(S)$ we have $\Ddd_{x}\subseteq\Reg(S)$.

It is easily seen that any semigroup homomorphism preserves Green's classes and maps idempotents to idempotents, as well as regular elements to regular elements.
If $S$ is a monoid with identity $1$, let
$G(S)=\{x\in S: (\exists y\in S)\ xy=yx=1\}$ denote the group of units of $S$. 

Next, let $S$ be a semigroup, and consider the variant $S^{a}=(S,\star_{a})$, where
$x\star_{a}y=xay$ for $x,y\in S$. To avoid confusion, we denote the Green's relations of $S^{a}$ by $\Rr^{a}$, $\Ll^{a}$, $\Hh^{a}$, $\Dd^{a}$ and $\Jj^{a}$. Similarly, for each $\Kk\in\{\Rr,\Ll,\Hh,\Dd,\Jj\}$ and each $x\in S$, $\Kkk^{a}_{x}$ denotes the $\Kk^{a}$-class of $S^{a}$ containing $x$. As in \cite{SandwichI}, we define the \emph{P-sets} of $S^{a}$:
\begin{equation*}
    \begin{aligned}
        P^{a}_{1}=&\{x\in S:xa \Rr x\} \hspace{2cm} P^{a}_{3}=\{x\in S:axa\Jj x\}\\
        P^{a}_{2}=&\{x\in S:ax \Ll x\} \hspace{2cm} P^{a}=P^{a}_{1} \cap P^{a}_{2}
    \end{aligned}
\end{equation*}
These sets shape the Green's classes of $S^{a}$, as proved in \cite[Theorem 2.13]{DolinkaEastMat} (the partial semigroup in the proof is simply the semigroup $S$). We state the result for convenience.
\begin{thm}
\label{t:GreenSandSgp}
    Let $S$ be a semigroup with $a\in S$. In the variant $S^{a}$, we have
    \setlength{\columnsep}{-0.7cm}
    \begin{multicols}{2}
    \begin{enumerate}[(i)]
        \item  $\Rrr^{a}_{x}=\left\{
                             \begin{array}{ll}
                                \Rrr_{x}\cap \Pp^{a}_{1}, & \hbox{\ if $x\in     \Pp^{a}_{1}$} \\
                                \{x\}, & \hbox{\ if $x\in S\setminus \Pp^{a}_{1}$,}
                             \end{array}
                             \right.$
        \item  $\Lll^{a}_{x}=\left\{
                             \begin{array}{ll}
                                \Lll_{x}\cap \Pp^{a}_{2}, & \hbox{\ if $x\in \Pp^{a}_{2}$} \\
                                \{x\}, & \hbox{\ if $x\in S\setminus \Pp^{a}_{2}$,}
                             \end{array}
                             \right.$
        \item  $\Hhh^{a}_{x}=\left\{
                            \begin{array}{ll}
                               \Hhh_{x}, & \hbox{\ \ \ \ \ if $x\in \Pp^{a}$} \\
                               \{x\}, & \hbox{\ \ \ \ \ if $x\in S\setminus \Pp^{a}$,}
                            \end{array}
                            \right.$
        \item  $\Ddd_{x}^{a}=\left\{
                             \begin{array}{ll}
                                \Ddd_{x}\cap \Pp^{a}, & \hbox{if $x\in \Pp^{a}$} \\
                                \Lll^{a}_{x}, & \hbox{if $x\in \Pp^{a}_{2}\setminus \Pp^{a}_{1}$} \\
                                \Rrr^{a}_{x}, & \hbox{if $x\in \Pp^{a}_{1}\setminus \Pp^{a}_{2}$} \\
                                \{x\}, & \hbox{if $x\in S\setminus (\Pp^{a}_{1}\cup \Pp^{a}_{2})$,}
                            \end{array}
                            \right.$
        \item  $\Jjj^{a}_{x}=\left\{
                             \begin{array}{ll}
                                \Jjj_{x}\cap \Pp^{a}_{3}, & \hbox{\ \ if $x\in \Pp^{a}_{3}$} \\
                                \Ddd^{a}_{x}, & \hbox{\ \ if $x\in S\setminus \Pp^{a}_{3}$.}
                            \end{array}
                            \right.$                
    \end{enumerate}
    \end{multicols}
    If $x\in S\setminus \Pp^{a}$, then $\Hhh^{a}_{x}=\{x\}$ is a non-group $\Hh^{a}$-class in $S^{a}$.
\end{thm}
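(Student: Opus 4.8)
The claim has two parts: that the $\Hh^{a}$-class of $x$ equals $\{x\}$, and that it is not a group. The first part is already recorded in item (iii) of Theorem~\ref{t:GreenSandSgp}: when $x\in S\setminus\Pp^{a}$ we fall into its second branch, so $\Hhh^{a}_{x}=\{x\}$. For the second part the plan is to invoke the standard fact that a one-element $\Hh$-class $\{x\}$ of an arbitrary semigroup $T$ is a subgroup of $T$ exactly when $x$ is an idempotent of $T$ (a trivial group must be $\{e\}$ for its own identity $e$). Taking $T=S^{a}$, this says that $\{x\}$ is a group $\Hh^{a}$-class if and only if $x\star_{a}x=x$, that is, $xax=x$. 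Hence it suffices to prove that $x\in S\setminus\Pp^{a}$ forces $xax\neq x$.

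I would prove this contrapositively: assume $xax=x$, and show $x\in\Pp^{a}$. Reading the relation as $x=(xa)x$ gives $x\in xaS^{1}$, hence $xS^{1}\subseteq xaS^{1}$, i.e.\ $x\leq_{\Rr}xa$; since trivially $xa\leq_{\Rr}x$, we obtain $xa\Rr x$, that is, $x\in\Pp^{a}_{1}$. Symmetrically, reading it as $x=x(ax)$ gives $x\in S^{1}ax$, hence $x\leq_{\Ll}ax$, and together with the trivial $ax\leq_{\Ll}x$ this yields $ax\Ll x$, that is, $x\in\Pp^{a}_{2}$.

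Combining the two memberships, $x\in\Pp^{a}_{1}\cap\Pp^{a}_{2}=\Pp^{a}$, contrary to the hypothesis $x\in S\setminus\Pp^{a}$. Therefore $xax\neq x$, so $\{x\}$ contains no idempotent of $S^{a}$ and is a non-group $\Hh^{a}$-class, as required. I do not foresee any genuine obstacle here: the argument is short, and the only points to handle with a little care are the elementary fact about one-element group $\Hh$-classes and keeping the divisibility implications straight (in each case one inclusion is forced by the equation $xax=x$ while the reverse inclusion is automatic).
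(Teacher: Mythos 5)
The paper contains no proof of this theorem to compare against: it is quoted wholesale from \cite[Theorem 2.13]{DolinkaEastMat} (``We state the result for convenience''), with the final sentence included in that citation. Judged on its own terms, the portion of the statement you actually argue --- the final sentence --- is handled correctly. The reduction to ``the singleton $\Hh^{a}$-class $\{x\}$ is a group if and only if $x$ is an idempotent of $S^{a}$, i.e.\ $x\star_{a}x=xax=x$'' is sound, and your contrapositive computation is right: from $xax=x$ you read $x=(xa)x\in xaS^{1}$, giving $x\leq_{\Rr}xa$, which combined with the automatic $xa\leq_{\Rr}x$ yields $x\in \Pp^{a}_{1}$; dually, $x=x(ax)\in S^{1}ax$ gives $x\in \Pp^{a}_{2}$, so $x\in \Pp^{a}$, contradicting the hypothesis. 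This is the standard argument and I see no error in it.

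The genuine gap is one of coverage. The statement to be proved consists of parts (i)--(v) \emph{together with} the final sentence, and your proposal proves only the final sentence while explicitly invoking part (iii) as an ingredient (``the first part is already recorded in item (iii)''). Nothing in the proposal establishes (i), (ii), (iv) or (v), and (iii) cannot be cited from the very theorem under proof without circularity. The substantive content of the theorem is precisely those five items --- for instance, for (i) one must show that $x\in \Pp^{a}_{1}$ implies $\Rrr^{a}_{x}=\Rrr_{x}\cap \Pp^{a}_{1}$, which requires translating $\Rr^{a}$-divisibility $y\in x\star_{a}S^{a}\cup\{x\}$ into ordinary $\Rr$-divisibility via the hypothesis $xa\Rr x$, and conversely; the arguments for (iv) and (v) build on (i)--(iii). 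If your intent was only to supply the one assertion not covered by items (i)--(v), your argument is complete and correct; as a proof of the full statement, it is missing its main body, which the paper itself delegates to \cite{DolinkaEastMat}.
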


\section{Partial Brauer monoids}
\label{s:PartialBrauerMonoids}

Let $\mathbb{N}=\{0,1,2,\ldots\}$ denote the set of all natural numbers. For an integer $n\geq 1$ write $[n]=\{1,\ldots,n\}$, and write $[0]=\emptyset$. For $A\subseteq \mathbb{N}$, let $A'=\{a':a\in A\}$. Now, for $n\in\mathbb{N}$, let 
$\PBd_{n}$ denote the set of all partitions of the set $[n]\cup [n]'$ into blocks of size at most 2. A partition $\alpha\in\PBd_{n}$ may be visually presented in the form of a diagram, consisting of two rows of $n$ vertices corresponding the elements of $[n]$ and $[n]'$ (increasing from left to right), where the vertex $i$ is directly above the vertex $i'$, and the elements of the same block are connected by a line drawn inside the rectangle formed by these vertices. In Figure \ref{f:exampleDiagMult}, we present such diagrams for the partitions:
\begin{equation*}
    \begin{aligned}
        \alpha&=\{ \{1,5\},\{2\},\{3,2'\},\{4\},\{6,5'\},\{7,7'\},\{1',6'\},\{3',4'\} \}\in\PBd_{7},\\
        \beta&=\{ \{1,2\},\{3,2'\},\{4\},\{5,7\},\{6,6'\},\{1',3'\},\{4',7'\},\{5'\} \}\in\PBd_{7}.
    \end{aligned}
\end{equation*}
It is easily seen that the diagram representing a partial Brauer partition is unique in terms of vertices and edges it contains (which is not necessarily true for partitions of other types). Note that we identify the elements of the symmetric group $\mathcal{S}_{n}$ with the corresponding elements of the group of units $G(\PBd_{n})$.

Blocks containing elements of both sets ($[n]$ and $[n]'$) are called \emph{transversals}. All other blocks are \emph{non-transversals}. Non-singleton, non-transversal blocks are called \emph{hooks} (upper or lower, if their elements belong to $[n]$ or $[n]'$, respectively). The number of transversals in $\alpha$ is the \emph{rank} of $\alpha$, denoted $\rank(\alpha)$. Further, for $\alpha\in\PBd_{n}$, we define the \emph{domain}, \emph{codomain}, \emph{kernel}, \emph{cokernel} and the sets of all nontransversal upper and lower blocks:
\begin{equation*}
    \begin{aligned}
        \dom(\alpha) &= \{x\in [n]:x \text{ belongs to a transversal of }\alpha\},\\
        \codom(\alpha) &= \{x\in [n]:x' \text{ belongs to a transversal of }\alpha\},\\
        \ker(\alpha) &= \{(x,y)\in [n]\times[n]: x \text{ and } y \text{ belong to the same block of } \alpha\},\\
        \coker(\alpha) &= \{(x,y)\in [n]\times[n]: x' \text{ and } y' \text{ belong to the same block of } \alpha\},\\
        \Nu(\alpha) &= \{X\in\alpha: X \text{ is an upper non-transversal block of }\alpha\},\\
        \Nl(\alpha) &= \{X\in\alpha: X' \text{ is a lower non-transversal block of }\alpha\}.
    \end{aligned}
\end{equation*}
For example, in the partition $\alpha$ from Figure \ref{f:exampleDiagMult}, we have $\rank(\alpha)=3$, $\dom(\alpha)=\{3,6,7\}$, $\codom(\alpha)=\{2,5,7\}$, and the non-trivial kernel class (the only upper hook) is $\{1,5\}$, while the non-trivial cokernel classes (i.e.\ the lower hooks) are $\{1,6\}$ and $\{3,4\}$.

For $n\in\mathbb{N}$ and partitions $\alpha,\beta\in\PBd_{n}$, we define the product diagram $\Pi(\alpha,\beta)$ in the following way: 
\begin{itemize}
    \item we modify the diagram representing $\alpha$ by renaming each lower vertex $x'\in[n]'$ to $x''$, hence obtaining the graph $\alpha_{\downarrow}$ on $[n]\cup [n]''$;
    \item we modify the diagram representing $\beta$ by renaming each upper vertex $x\in[n]$ to $x''$, hence obtaining the graph $\beta^{\uparrow}$ on $[n]''\cup [n]$;
    \item we identify the vertices of the set $[n]''$ in $\alpha_{\downarrow}$ with the corresponding vertices of $[n]''$ in $\beta^{\uparrow}$, and obtain the graph $\Pi(\alpha,\beta)$.
\end{itemize}
Finally, the product partition $\alpha\beta$ of $\alpha$ and $\beta$ is the partial Brauer partition on $[n]\cup[n]'$ defined in the following way: for distinct $i,j\in[n]\cup[n]'$ we have $\{i,j\}\in\alpha\beta$ if, and only if, vertices $i$ and $j$ in $\Pi(\alpha,\beta)$ are connected by a path. In Figure \ref{f:exampleDiagMult}, we provide an example illustrating this calculation.
\begin{figure}
    \begin{center}
        \includegraphics[width=0.7\textwidth]{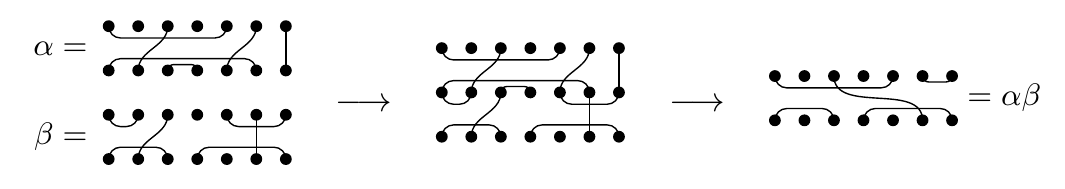}
        \caption{Multiplication of partitions $\alpha$ and $\beta$ via the product diagram $\Pi(\alpha,\beta)$.}
        \label{f:exampleDiagMult}
    \end{center}
\end{figure}

In addition to these standard notions, we will also need some novel ones, in order to present our results effectively. As in \cite{SandwichIII}, we say an equivalence $\varepsilon$ is a \emph{1-2-equivalence} if each $\varepsilon$-class has size at most $2$.
We introduce a new term, tailored to partial Brauer partitions. Let $\varepsilon$ be an equivalence on a set $T$, and let $X\subseteq T$. The pair $(\varepsilon,X)$ is a \emph{PB-pair} on $T$, if $\varepsilon$ is a 1-2-equivalence, and each element of $X$ belongs to a singleton $\varepsilon$-class. Note that any PB-pair on $[n]$ is a kernel-domain pair $(\ker(\alpha),\dom(\alpha))$, for some $\alpha\in\PBd_{n}$ (and similarly, there exists $\beta\in\PBd_{n}$ such that $(\coker(\beta),\codom(\beta))$ is the targeted pair). Thus, the elements of $X$ are called the \emph{domain} elements of the PB-pair, and $|X|$ is the \emph{rank} of the PB-pair. 

Let $(\varepsilon_{1},X_{1})$ and $(\varepsilon_{2},X_{2})$ be PB-pairs on a set $T$ and consider $(\varepsilon_{1}\vee\varepsilon_{2}, Z)$, where 
$$Z=\{(x,y)\in X_{1}\times X_{2}:x \text{ and } y \text{ belong to the same class of } \varepsilon_{1}\vee\varepsilon_{2}\}.$$
We say that $(\varepsilon_{1}\vee\varepsilon_{2}, Z)$ is the \emph{join} of the PB-pairs $(\varepsilon_{1},X_{1})$ and $(\varepsilon_{2},X_{2})$, and we denote it $(\varepsilon_{1},X_{1})\vee(\varepsilon_{2},X_{2})$.
Note that, in general, the join of two PB-pairs is not a PB-pair. The set $Z$ is called the \emph{domain} of the join, and $|Z|$ is the \emph{rank} of the join, denoted $\rank((\varepsilon_{1},X_{1})\vee(\varepsilon_{2},X_{2}))$. Note that the rank of the join does not depend on the order of PB-pairs (in other words, the join $(\varepsilon_{2},X_{2})\vee(\varepsilon_{1},X_{1})$ has the same rank). Furthermore, each element of $X_{1}$ ($X_{2}$) occurs in at most one pair of $Z$, as $\varepsilon_{1}$ and $\varepsilon_{2}$ are 1-2-equivalences, and elements of the domains belong to singleton classes of the corresponding equivalence. Just as a PB-pair represents a half of some partition, the join of PB-pairs represents identifying vertices in some product diagram (where $\varepsilon_{1}\vee\varepsilon_{2}$ is the resulting equivalence on the middle row and $Z$ contains the terminal vertices of the paths that will determine the transversals of the product). 

To visually present a PB-pair $(\varepsilon, X)$ on a finite set $T$, we will use the same technique as for diagrams. We arrange $|T|$ vertices in a row, and identify them with the elements of $T$ (if $T=[n]$ for some $n\in\mathbb{N}$, we arrange them in the ascending order). Then, elements belonging to the same $\varepsilon$-class are connected by a line drawn above the vertices. Finally, each of the elements of the domain is the starting point of an upward straight line. Effectively, it means drawing a half-diagram, as in Figure \ref{f:sigmacoker}.
\begin{figure}
    \begin{center}
        \includegraphics[width=0.7\textwidth]{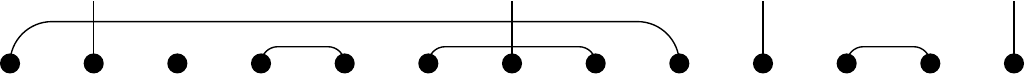}
        \caption{A visual presentation of a PB-pair.} 
        \label{f:sigmacoker}
    \end{center}
\end{figure}
We may also visually present the join of two PB-pairs. This will correspond to the middle part of a product diagram. Namely, in the join of the PB-pairs $(\varepsilon_{1},X_{1})$ and $(\varepsilon_{2},X_{2})$ on a set $T$, the first PB-pair is drawn in the way described above and the second one is drawn on the same set of vertices, but all the lines corresponding to the $\varepsilon_{2}$-connections and elements of $X_{2}$ will be drawn below the vertices, as in Figure \ref{f:joinOfPB-pairs}.
\begin{figure}
    \begin{center}
        \includegraphics[width=0.7\textwidth]{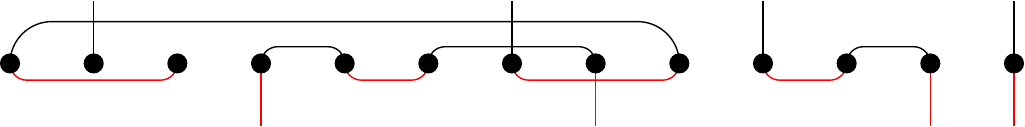}
        \caption{A visual presentation of the join of two PB-pairs.} 
        \label{f:joinOfPB-pairs}
    \end{center}
\end{figure}
Clearly, the elements of the domain of the join correspond to the \emph{domain paths} - paths connecting elements of $X_{1}$ and $X_{2}$ (including the trivial paths, as well). Such a path connects an upward straight line and a downward straight line, while the rest of the path is made up by hooks. In fact, such a component of the graph is of form
\begin{description}
   \item[(P)\label{(P)}] $u\xlongleftrightarrow{\varepsilon_{2}} w_{1}\xlongleftrightarrow{\varepsilon_{1}} \cdots \xlongleftrightarrow{\varepsilon_{2}} w_{2k-1} \xlongleftrightarrow{\varepsilon_{1}} v$, for some $u\in X_{1}$, $v\in X_{2}$, $k\geq 1$ and $w_{1},\ldots,w_{2k-1}\in[n],$\\[-10pt]
\end{description}
(If $u=v$, we have a trivial domain path.) Note that each of the vertices $w_{1},\ldots,w_{2k-1}$ belongs to a non-singleton $\varepsilon_{1}$-class and to a non-singleton $\varepsilon_{2}$-class.

This graphical construction is quite similar to graphs $\Lambda(\alpha)$ and $\Gamma_{\alpha}$ in \cite{TwAlg2} and \cite{TwAlg7}, respectively. In both cases the graph was to determine whether $\alpha$ is an idempotent, which boils down to whether $\rank(\alpha^{2})=\rank(\alpha)$. Here, our graphs will be used to determine if $\rank(\alpha\xi)=\rank(\alpha)$, which is a more general problem.

In our discussion, we will rely on the notions and conclusions made here, even without explicitly mentioning the visual presentations.

\section{Variants of $\PBd_{n}$}
\label{s:Variants}

Let $n\in\mathbb{N}$ and let $\alpha\in\PBd_{n}$. Put $r=\rank \alpha$ and consider the variant $\PBd^{\alpha}_{n}$. In \cite[Proposition 5.7]{SandwichIII} it was proved that
\begin{equation}
\label{e:PsetsPB}
    \begin{aligned}
        \Pp^{\alpha}_{1}&=\{\xi:\PBd_{n}:\rank(\xi\alpha)=\rank(\xi)\},\\
        \Pp^{\alpha}_{2}&=\{\xi:\PBd_{n}:\rank(\alpha\xi)=\rank(\xi)\},\\
        \Reg(\PBd^{\alpha}_{n})=\Pp^{\alpha}=\Pp^{\alpha}_{3}&=
        \{\xi\in\PBd_{n}:\rank(\xi\alpha)=\rank(\alpha\xi)=\rank(\xi)\}\\
        &=\{\xi\in\PBd_{n}:\rank(\alpha\xi\alpha)=\rank(\xi)\}.
    \end{aligned}
\end{equation}
Furthermore, we have
\begin{prop}
\label{p:regClassesPB}
    If $\xi\in\Pp^{\alpha}=\Reg(\PBd^{\alpha}_{n})$, then
    \begin{enumerate}[(i)]
        \item $\Rrr^{\alpha}_{\xi}=\Rrr_{\xi}\cap\Pp^{\alpha}=\{\sigma\in\Pp^{\alpha}:\ker(\sigma)=\ker(\xi), \text{ and } \dom(\sigma)=\dom(\xi)\}$,
        \item $\Lll^{\alpha}_{\xi}=\Lll_{\xi}\cap\Pp^{\alpha}=\{\sigma\in\Pp^{\alpha}:\coker(\sigma)=\coker(\xi), \text{ and } \codom(\sigma)=\codom(\xi)\}$,
        \item $\Ddd^{\alpha}_{\xi}=\Ddd_{\xi}\cap\Pp^{\alpha}=\{\sigma\in\Pp^{\alpha}:\rank(\sigma)=\rank(\xi)\}$.
    \end{enumerate}
    Thus, the regular $\Jj^{\alpha}=\Dd^{\alpha}$-classes of $\PBd^{\alpha}_{n}$ are precisely the sets
    \begin{equation*}
        \Ddd^{\alpha}_{k}=\Ddd_{k}\cap\Pp^{\alpha}=\{\xi\in\Pp^{\alpha}:
        \rank(\xi)=k\}\hspace{1cm} \text{ for each } 0\leq k\leq r.
    \end{equation*}
    These form a chain under the usual ordering $\Jj^{\alpha}$-classes: $\Ddd^{\alpha}_{k}\leq\Ddd^{\alpha}_{l}\Leftrightarrow k\leq l$.
\end{prop}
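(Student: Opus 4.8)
My starting point would be Theorem~\ref{t:GreenSandSgp}. By \eqref{e:PsetsPB} we have $\xi\in\Pp^{\alpha}=\Pp^{\alpha}_{1}\cap\Pp^{\alpha}_{2}=\Pp^{\alpha}_{3}=\Reg(\PBd^{\alpha}_{n})$, so parts (i), (ii), (iv) and (v) of that theorem immediately give $\Rrr^{\alpha}_{\xi}=\Rrr_{\xi}\cap\Pp^{\alpha}_{1}$, $\Lll^{\alpha}_{\xi}=\Lll_{\xi}\cap\Pp^{\alpha}_{2}$, $\Ddd^{\alpha}_{\xi}=\Ddd_{\xi}\cap\Pp^{\alpha}$ and $\Jjj^{\alpha}_{\xi}=\Jjj_{\xi}\cap\Pp^{\alpha}_{3}=\Jjj_{\xi}\cap\Pp^{\alpha}$. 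The first step is then to upgrade the P-sets: I would show that if $\xi\in\Pp^{\alpha}_{2}$ then $\Rrr_{\xi}\subseteq\Pp^{\alpha}_{2}$. Since $\Rr$ is a left congruence on $\PBd_{n}$, $\sigma\Rr\xi$ forces $\alpha\sigma\Rr\alpha\xi$; as $\Rr\subseteq\Dd$ and $\rank$ is constant on each $\Dd$-class of $\PBd_{n}$, this gives $\rank(\alpha\sigma)=\rank(\alpha\xi)=\rank(\xi)=\rank(\sigma)$, that is $\sigma\in\Pp^{\alpha}_{2}$ by \eqref{e:PsetsPB}. Hence $\Rrr_{\xi}\cap\Pp^{\alpha}_{1}=\Rrr_{\xi}\cap\Pp^{\alpha}$, and dually (using that $\Ll$ is a right congruence) $\Lll_{\xi}\cap\Pp^{\alpha}_{2}=\Lll_{\xi}\cap\Pp^{\alpha}$; also $\Jj=\Dd$ on the finite semigroup $\PBd_{n}$, so $\Jjj^{\alpha}_{\xi}=\Ddd^{\alpha}_{\xi}$. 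Substituting the standard descriptions of Green's relations on $\PBd_{n}$ — $\Rrr_{\xi}=\{\sigma:\ker\sigma=\ker\xi,\ \dom\sigma=\dom\xi\}$, the analogous statement for $\Ll$ with $\coker$ and $\codom$, and $\Ddd_{\xi}=\{\sigma:\rank\sigma=\rank\xi\}$ — then yields (i)--(iii).

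Next I would pin down the regular $\Dd^{\alpha}=\Jj^{\alpha}$-classes. By (iii) the class $\Ddd^{\alpha}_{\xi}$ depends only on $\rank\xi$, so these classes are exactly the nonempty sets among $\Ddd^{\alpha}_{k}=\Ddd_{k}\cap\Pp^{\alpha}$. If $\xi\in\Pp^{\alpha}$ then $\rank\xi=\rank(\xi\alpha)\leq\rank\alpha=r$ by \eqref{e:PsetsPB}, so only $0\leq k\leq r$ can occur. For the converse I would produce explicit representatives: writing the transversals of $\alpha$ as $\{a_{i},b_{i}'\}$ for $i\in[r]$, and for $J\subseteq[r]$ letting $\xi_{J}\in\PBd_{n}$ be the partition whose only non-singleton blocks are $\{b_{j},a_{j}'\}$ for $j\in J$, a short product computation (the transversal $\{a_{j},b_{j}'\}$ of $\alpha$ meets the block $\{b_{j},a_{j}'\}$ of $\xi_{J}$ through the middle vertex $b_{j}$ precisely when $j\in J$, all other transversals of $\alpha$ being lost) shows $\rank(\alpha\xi_{J})=\rank(\xi_{J}\alpha)=\rank(\xi_{J})=|J|$; hence $\xi_{J}\in\Pp^{\alpha}$ with $\rank\xi_{J}=|J|$, so $\Ddd^{\alpha}_{k}\neq\emptyset$ for every $0\leq k\leq r$, and these are precisely the regular $\Dd^{\alpha}=\Jj^{\alpha}$-classes.

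Finally, the chain. One direction is free: $\Ddd^{\alpha}_{k}\leq\Ddd^{\alpha}_{l}$ in the poset of $\Jj^{\alpha}$-classes means $\sigma\in(\PBd^{\alpha}_{n})^{1}\star_{\alpha}\tau\star_{\alpha}(\PBd^{\alpha}_{n})^{1}$ for some $\sigma\in\Ddd^{\alpha}_{k}$, $\tau\in\Ddd^{\alpha}_{l}$, and since every $\star_{\alpha}$-product is an ordinary product in $\PBd_{n}$ this forces $\sigma\leq_{\Jj}\tau$ in $\PBd_{n}$, whence $k=\rank\sigma\leq\rank\tau=l$. For the converse, given $k\leq l\leq r$ I would take $J_{k}\subseteq J_{l}\subseteq[r]$ with $|J_{k}|=k$, $|J_{l}|=l$ and verify that $\xi_{J_{k}}\star_{\alpha}\xi_{J_{l}}\star_{\alpha}\xi_{J_{k}}=\xi_{J_{k}}$, so that $\xi_{J_{k}}\leq_{\Jj^{\alpha}}\xi_{J_{l}}$ and hence $\Ddd^{\alpha}_{k}\leq\Ddd^{\alpha}_{l}$. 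I expect this last identity to be the main obstacle: one must follow the blocks of $\alpha$ through the two sandwich multiplications, checking that $\xi_{J_{l}}\alpha$ contributes exactly the transversals indexed by $J_{l}$, that the flanking copies of $\xi_{J_{k}}$ cut these down to the $J_{k}$-indexed ones and annihilate the upper and lower hooks inherited from $\alpha$, and that no extraneous block survives — exactly the kind of path analysis captured by the notion of a join of PB-pairs in Section~\ref{s:PartialBrauerMonoids}. Everything else is a routine assembly of Theorem~\ref{t:GreenSandSgp}, \eqref{e:PsetsPB} and the Green's structure of $\PBd_{n}$.
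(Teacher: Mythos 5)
Your argument is correct, and its skeleton is the same as the paper's (Theorem~\ref{t:GreenSandSgp} plus the description \eqref{e:PsetsPB} of the P-sets), but you fill in several steps differently. The one substantive divergence is in how you upgrade $\Rrr_{\xi}\cap\Pp^{\alpha}_{1}$ to $\Rrr_{\xi}\cap\Pp^{\alpha}$: you prove directly that $\Rrr_{\xi}\subseteq\Pp^{\alpha}_{2}$ via the left-congruence property of $\Rr$ and the fact that $\rank$ is a $\Dd$-invariant of $\PBd_{n}$, whereas the paper gets the same conclusion for free from the containment $\Rrr^{\alpha}_{\xi}\subseteq\Ddd^{\alpha}_{\xi}\subseteq\Reg(\PBd^{\alpha}_{n})=\Pp^{\alpha}$ (a regular $\Dd^{\alpha}$-class consists of regular elements), so that intersecting $\Rrr_{\xi}\cap\Pp^{\alpha}_{1}$ with $\Pp^{\alpha}$ costs nothing. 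Both are valid; the paper's route is shorter, yours is more transparent about why membership in $\Pp^{\alpha}_{2}$ propagates along an $\Rr$-class. Beyond that, the paper simply cites \cite[Theorem 4.9(iv), Propositions 5.7--5.8]{SandwichIII} for the concrete descriptions of $\Rrr_{\xi}$, $\Lll_{\xi}$, $\Ddd_{\xi}$ in terms of (co)kernels, (co)domains and rank, for part (iii), and for the statement about the regular $\Jj^{\alpha}=\Dd^{\alpha}$-classes forming a chain indexed by $0\leq k\leq r$; you instead reprove these from scratch, including the explicit representatives $\xi_{J}$ witnessing that each $\Ddd^{\alpha}_{k}$ is nonempty and the identity $\xi_{J_{k}}\star_{\alpha}\xi_{J_{l}}\star_{\alpha}\xi_{J_{k}}=\xi_{J_{k}}$ (which does check out: one finds $\xi_{J_{k}}\alpha\xi_{J_{l}}=\xi_{J_{k}}$ already, since $\coker(\xi_{J_{k}}\alpha)=\coker(\alpha)$ has only singletons on $\codom(\alpha)$). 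So your proposal buys self-containedness at the price of length, while the paper's version leans on the general sandwich-semigroup machinery it has already imported.
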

\begin{proof}
    Part $(iii)$ follows from \cite[Proposition 5.8]{SandwichIII}. We prove only $(i)$, as the proof for $(ii)$ is dual. Let $\xi\in\Pp^{\alpha}$. Since $\Rrr^{\alpha}_{\xi}\subseteq\Ddd^{\alpha}_{\xi}\subseteq\Reg(\PBd^{\alpha}_{n})=\Pp^{\alpha}$, Theorem \ref{t:GreenSandSgp}(i) gives
    $$\Rrr^{\alpha}_{\xi}=\Rrr^{\alpha}_{\xi}\cap\Pp^{\alpha}=\Rrr_{\xi}\cap\Pp^{\alpha}_{1}\cap\Pp^{\alpha}=\Rrr_{\xi}\cap\Pp^{\alpha}.$$
    The last equality in $(i)$ follows from \cite[Theorem 4.9(iv)]{SandwichIII}. The statement about the regular $\Dd^{\alpha}$-classes was proved in \cite[Proposition 5.8(i)]{SandwichIII}.
\end{proof}

Now, we consider a regular $\Dd^{\alpha}$-class, with the aim to calculate the number of $\Ll^{\alpha}$-classes in it. Let $0\leq q\leq r=\rank(\alpha)$, and consider an $\Ll^{\alpha}$-class in $\Ddd^{\alpha}_{q}$. By Lemma \ref{p:regClassesPB}(ii), such a class is uniquely determined by the properties of the lower row of its elements (the cokernel-codomain combination). If we want to enumerate these classes, we need to know which cokernel-codomain pairs occur in elements of $\Ddd^{\alpha}_{q}$. Such a pair is a PB-pair on $[n]=\{1,\ldots,n\}$. 

In order to calculate the number of these PB-pairs, we will introduce additional notation. For $m\in\mathbb{N}$ and $0\leq k\leq m$ with $k\equiv m\ (\mod1 2)$, let us fix the equivalence $\varepsilon_{m,k}$ with classes $\{1\},\ldots,\{k\},\{k+1,k+2\},\ldots,\{m-1,m\}$. Furthermore, for for $m,k,t,q\in\mathbb{N}$, let $\mu(m,k,t,q)$ be the number of PB-pairs $(\eta,X)$ such that $|X|=q$ and $\rank((\varepsilon_{m,k},[t])\vee(\eta,X))=q$. If the equivalence $\varepsilon_{m,k}$ is undefined, or $(\varepsilon_{m,k},[t])$ is not a PB-pair, we fix $\mu(m,k,t,q)=0$. Note that, by symmetry, $\mu(m,k,t,q)$ is also the number of PB-pairs $(\eta,X)$ such that $|X|=q$ and the rank of $(\eta,X)\vee(\varepsilon_{m,k},[t])$ is $q$.
Now, we may prove the following two lemmas.

\begin{lem}\label{l:ddd/ll}
    Let $n\in\mathbb{N}$ and $\alpha\in\PBd_{n}$. Put $r=\rank \alpha$ and let $k$ denote the number of singleton classes in $\ker(\alpha)$. For $0\leq q\leq r$, in the variant $\PBd^{\alpha}_{n}$, we have $|\Ddd^{\alpha}_{q}/\Ll^{\alpha}|=\mu(n,k,r,q)$.
\end{lem}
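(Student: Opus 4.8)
The plan is to set up a bijection between the set of $\Ll^{\alpha}$-classes inside $\Ddd^{\alpha}_{q}$ and the set of PB-pairs $(\eta,X)$ on $[n]$ with $|X|=q$ and $\rank\bigl((\varepsilon_{n,k},[r])\vee(\eta,X)\bigr)=q$, whose cardinality is $\mu(n,k,r,q)$ by definition. By Proposition \ref{p:regClassesPB}(ii), an $\Ll^{\alpha}$-class in $\Ddd^{\alpha}_{q}$ is determined by, and in bijection with, the cokernel--codomain pairs $(\coker(\sigma),\codom(\sigma))$ that arise among $\sigma\in\Ddd^{\alpha}_{q}$; each such pair is a PB-pair on $[n]$ of rank $q$. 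So the whole statement reduces to the following claim: a PB-pair $(\eta,X)$ of rank $q$ occurs as a cokernel--codomain pair of some $\sigma\in\Pp^{\alpha}$ with $\rank(\sigma)=q$ \emph{if and only if} $\rank\bigl((\varepsilon_{n,k},[r])\vee(\eta,X)\bigr)=q$.

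First I would normalise $\alpha$. Since the number of $\Ll^{\alpha}$-classes in $\Ddd^{\alpha}_{q}$ depends only on the relevant combinatorial data of $\alpha$, and since conjugating $\alpha$ by a permutation on the \emph{upper} row leaves all $\rank(\xi\alpha)$, $\rank(\alpha\xi)$ unchanged up to relabelling (so gives an isomorphic variant and carries $\Ll^{\alpha}$-classes to $\Ll^{\alpha}$-classes), I may assume without loss of generality that $\ker(\alpha)=\varepsilon_{n,k}$ and $\dom(\alpha)=[r]$ — i.e.\ the upper half of $\alpha$ is exactly the half-diagram $(\varepsilon_{n,k},[r])$. (Here $k\equiv n\pmod 2$ is forced, and $(\varepsilon_{n,k},[r])$ is a genuine PB-pair since the $r$ transversal feet lie among the $k$ singleton classes, so $r\le k$.) The point of this normalisation is that for $\sigma\in\PBd_n$, the product diagram $\Pi(\alpha,\sigma)$ has middle row carrying $\ker(\alpha)=\varepsilon_{n,k}$ from above and $\coker(\sigma)=\eta$ from below, with the $[r]$-feet of $\alpha$ and the $X$-feet of $\sigma$ being the straight lines — this is precisely the visual picture of the join $(\varepsilon_{n,k},[r])\vee(\eta,X)$ described in Section \ref{s:PartialBrauerMonoids}, and the domain paths of type \ref{(P)} are exactly the transversals of $\alpha\sigma$. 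Hence $\rank(\alpha\sigma)=\rank\bigl((\varepsilon_{n,k},[r])\vee(\eta,X)\bigr)$, where $(\eta,X)=(\coker(\sigma),\codom(\sigma))$.

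Now the two directions. For the "if" direction, suppose $(\eta,X)$ is a PB-pair on $[n]$ with $|X|=q$ and $\rank\bigl((\varepsilon_{n,k},[r])\vee(\eta,X)\bigr)=q$. Choose any $\sigma\in\PBd_n$ with $\rank(\sigma)=q$, $\coker(\sigma)=\eta$ and $\codom(\sigma)=X$ (such $\sigma$ exists since $(\eta,X)$ is a PB-pair of rank $q$; just build a diagram whose lower half is the prescribed half-diagram, with any valid upper half of rank $q$). Then $\rank(\alpha\sigma)=q=\rank(\sigma)$ by the paragraph above, so $\sigma\in\Pp^{\alpha}_2$. To land inside $\Pp^{\alpha}=\Reg(\PBd^{\alpha}_n)$ I additionally need $\rank(\sigma\alpha)=\rank(\sigma)$; this I can arrange by a suitable choice of the upper half of $\sigma$ — concretely, one can take $\sigma$ with $\dom(\sigma)$ and $\ker(\sigma)$ tailored so that the $q$ transversal feet of $\sigma$ on the top row sit on singleton $\ker(\sigma)$-classes chosen among $\dom(\alpha)=[r]$ (possible since $q\le r$ and $\ker(\alpha)$ has enough singletons there), forcing $\rank(\sigma\alpha)=q$. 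Then $\sigma\in\Pp^{\alpha}$ with $\rank(\sigma)=q$, and by Proposition \ref{p:regClassesPB}(iii) $\sigma\in\Ddd^{\alpha}_q$, giving an $\Ll^{\alpha}$-class with the prescribed cokernel--codomain pair. For the "only if" direction, suppose $\sigma\in\Ddd^{\alpha}_q=\Ddd_q\cap\Pp^{\alpha}$; then $\sigma\in\Pp^{\alpha}_2$, i.e.\ $\rank(\alpha\sigma)=\rank(\sigma)=q$, and by the product-diagram identification $\rank\bigl((\varepsilon_{n,k},[r])\vee(\coker(\sigma),\codom(\sigma))\bigr)=q$. Combining both directions with the bijection $\Ll^{\alpha}\text{-class}\leftrightarrow$ cokernel--codomain pair from Proposition \ref{p:regClassesPB}(ii) yields $|\Ddd^{\alpha}_q/\Ll^{\alpha}|=\mu(n,k,r,q)$.

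The main obstacle I anticipate is the bookkeeping in the "if" direction: one must exhibit a witness $\sigma$ that simultaneously realises the prescribed lower half $(\eta,X)$, has the right rank $q$, and also satisfies $\rank(\sigma\alpha)=q$ so that it genuinely lands in $\Reg(\PBd^{\alpha}_n)=\Pp^{\alpha}$ rather than merely in $\Pp^{\alpha}_2$. This amounts to checking that the constraint $\rank(\alpha\sigma)=q$ (which fixes how the $X$-feet of $\sigma$ thread through the join) is compatible with a free enough choice of the upper half of $\sigma$ to also secure $\rank(\sigma\alpha)=q$; after the normalisation of $\alpha$ and using $q\le r\le k$ this is a routine but slightly fiddly diagram construction. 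Everything else — the normalisation argument and the translation between product diagrams and joins of PB-pairs — is immediate from the setup in Section \ref{s:PartialBrauerMonoids}.
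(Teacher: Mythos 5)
Your proposal follows essentially the same route as the paper's proof: reduce via Proposition \ref{p:regClassesPB}(ii) to counting the cokernel--codomain pairs realised by elements of $\Ddd^{\alpha}_{q}$, characterise these as the PB-pairs $(\eta,X)$ of rank $q$ whose join with the upper half of $\alpha$ has rank $q$, and identify that count with $\mu(n,k,r,q)$ by moving $(\ker(\alpha),\dom(\alpha))$ onto $(\varepsilon_{n,k},[r])$ by a permutation (you normalise $\alpha$ at the outset, the paper transports the PB-pairs at the end --- these are equivalent). Your witness construction in the ``if'' direction is also sound in substance; the paper does it a little more economically by borrowing the upper half of the witness from an arbitrary $\beta\in\Ddd^{\alpha}_{q}$, which makes the remaining condition $\rank(\alpha\gamma)=\rank(\gamma)$ automatic.

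There is, however, a pervasive orientation error which, read literally, makes several of your displayed identities false. The condition governed by the lower half $(\coker(\sigma),\codom(\sigma))$ of $\sigma$ is membership in $\Pp^{\alpha}_{1}$, i.e.\ $\rank(\sigma\alpha)=\rank(\sigma)$; the relevant product diagram is $\Pi(\sigma,\alpha)$, whose middle row carries $\coker(\sigma)$ from above and $\ker(\alpha)$ from below. You write $\Pi(\alpha,\sigma)$ and $\rank(\alpha\sigma)$ throughout, and accordingly conclude $\sigma\in\Pp^{\alpha}_{2}$. The asserted identity $\rank(\alpha\sigma)=\rank\bigl((\varepsilon_{n,k},[r])\vee(\coker(\sigma),\codom(\sigma))\bigr)$ is not true: $\rank(\alpha\sigma)$ is determined by the \emph{lower} half of $\alpha$ and the \emph{upper} half of $\sigma$. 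The same swap infects the witness construction: the condition still to be secured there is $\rank(\alpha\sigma)=q$, which is the rank of the join of $(\coker(\alpha),\codom(\alpha))$ with $(\ker(\sigma),\dom(\sigma))$, so the $q$ transversal feet of $\sigma$ in the top row must be placed on singleton $\coker(\alpha)$-classes inside $\codom(\alpha)$ --- an $r$-element set which is \emph{not} $[r]$ after your normalisation, since that normalisation only touches the upper half of $\alpha$ --- and not inside $\dom(\alpha)=[r]$ as you state; placing $\dom(\sigma)\subseteq\dom(\alpha)$ forces neither $\rank(\sigma\alpha)=q$ (which does not depend on $\dom(\sigma)$ at all) nor $\rank(\alpha\sigma)=q$. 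Once every occurrence of these pairs is swapped consistently, your argument is correct and coincides with the paper's.
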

\begin{proof}
    Let $0\leq q\leq r=\rank(\alpha)$. As we noted above, $|\Ddd^{\alpha}_{q}/\Ll^{\alpha}|$ is the number of all PB-pairs on $[n]$ that occur as cokernel-codomain pairs in the elements of $\Ddd^{\alpha}_{q}$. Let $(\eta,X)$ be such a PB-pair. Firstly, note that $|X|=q$, since $\Ddd^{\alpha}_{q}$ contains only elements of rank $q$. Secondly, note that the join $(\eta,X)\vee(\ker(\alpha),\dom(\alpha))$ has rank $q$. Let us elaborate. Since $\eta=\coker(\xi)$ and $X=\codom(\xi)$ for some $\xi\in\Pp^{\alpha}\subseteq\Pp^{\alpha}_{1}$, from (\ref{e:PsetsPB}) follows $\rank(\xi\alpha)=\rank(\xi)$, which means that the join $(\eta,X)\vee(\ker(\alpha),\dom(\alpha))$ has rank $|X|$.
    
    Thirdly, we show that each PB-pair $(\eta, X)$ on $[n]$ such that
    \begin{enumerate}[(a)]
        \item\label{(a1)} $|X|=q$, and
        \item\label{(b1)} the join $(\eta,X)\vee(\ker(\alpha),\dom(\alpha))$ has rank $q$,
    \end{enumerate}
    occurs as a cokernel-codomain pair of an element from $\Ddd^{\alpha}_{q}$. Let $(\eta,X)$ be a PB-pair on $[n]$ that satisfies these requirements. Let $\beta\in\Ddd^{\alpha}_{q}$, and consider a partition $\gamma\in\PBd_{n}$ with %
    $$\ker(\gamma)=\ker(\beta),\ \dom(\gamma)=\dom(\beta),\ \ \text{and}\ \  \coker(\gamma)=\eta,\ \codom(\gamma)=X.$$
    (such a partial Brauer partition clearly exists). Then, the first two properties and Proposition (\ref{e:PsetsPB}) imply $\gamma\in\Pp^{\alpha}_{2}$ (because $\rank(\alpha\beta)=\rank(\beta)$, so $\rank(\alpha\gamma)=\rank(\gamma)$). Similarly, the last two properties and (\ref{e:PsetsPB}) imply $\gamma\in\Pp^{\alpha}_{1}$ (because the join $(\eta,X)\vee(\ker(\alpha),\dom(\alpha))$ has rank $q$). Thus, we have $\gamma\in\Pp^{\alpha}$. Since $\rank(\gamma)=|X|=q$, from Proposition \ref{p:regClassesPB} we deduce $\gamma\in\Ddd^{\alpha}_{q}$.
    
    We have proved that $|\Ddd^{\alpha}_{q}/\Ll^{\alpha}|$ is the number of all PB-pairs on $[n]$ satisfying (\ref{(a1)}) and (\ref{(b1)}). We claim that the number of such pairs is $\mu(m,k,r,q)$. Recall that $k$ denotes the number of singletons in the partition corresponding $\ker(\alpha)$. Note that the PB-pairs $(\ker(\alpha),\dom(\alpha))$ and $(\varepsilon_{n,k},[r])$ have equal ranks and their equivalences have the same number of singletons. Thus, there exists a bijection $\phi\in\Ss_{n}$ mapping $\dom(\alpha)$ to $[r]$, and the classes of $\ker(\alpha)$ to the classes of $\varepsilon_{n,k}$. Hence, for a PB-pair $(\eta,X)$ on $[n]$ satisfying (\ref{(a1)}) and (\ref{(b1)}), we may define a PB-pair $(\eta_{\phi},X_{\phi})$ on $[n]$ with $\eta_{\phi}=\{(x\phi,y\phi):(x,y)\in\eta\}$ and $X_{\phi}=\{x\phi:x\in X\}$. Since $\phi$ maps $\dom(\alpha)$ to $[r]$, and since it maps the classes of $\ker(\alpha)$ to the classes of $\varepsilon_{n,k}$, the rank of the join $(\eta_{\phi},X_{\phi})\vee(\varepsilon_{n,k},[r])$ is $q$. It easily seen that  $(\eta,X)\mapsto (\eta_{\phi},X_{\phi})$ is a bijection mapping PB-pairs on $[n]$ satisfying (\ref{(a1)}) and (\ref{(b1)}) to PB-pairs on $[n]$ of rank $q$ such that the rank of their join with $(\varepsilon_{m,k},[r])$ is $q$. Therefore, $|\Ddd^{\alpha}_{q}/\Ll^{\alpha}|=\mu(m,k,r,q)$.
\end{proof}

Of course, the dual statement immediately follows: 
\begin{lem}
\label{l:ddd/rr}
    Let $n\in\mathbb{N}$ and $\alpha\in\PBd_{n}$. Put $r=\rank \alpha$ and let $k$ denote the number of singleton classes in $\coker(\alpha)$. For $0\leq q\leq r$, in $\PBd^{\alpha}_{n}$ holds:
    $$|\Ddd^{\alpha}_{q}/\Rr^{\alpha}|=\mu(n,k,r,q).$$
\end{lem}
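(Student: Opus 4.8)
The cleanest route is to deduce this from Lemma~\ref{l:ddd/ll} via the natural involution of $\PBd_n$. Recall that $\PBd_n$ is a regular $*$-semigroup: reflecting a diagram about the horizontal axis (equivalently, swapping $x\leftrightarrow x'$ for every $x\in[n]$) defines a map $\xi\mapsto\xi^{*}$ satisfying $(\xi\zeta)^{*}=\zeta^{*}\xi^{*}$, $(\xi^{*})^{*}=\xi$, $\rank(\xi^{*})=\rank(\xi)$, and, crucially, $\ker(\xi^{*})=\coker(\xi)$, $\dom(\xi^{*})=\codom(\xi)$ (and dually). My plan is to transport the statement of Lemma~\ref{l:ddd/ll} applied to $\alpha^{*}$ back to $\alpha$ through this involution.

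The key steps are as follows. First, $\xi\mapsto\xi^{*}$ is an anti-isomorphism from $\PBd^{\alpha}_{n}$ onto $\PBd^{\alpha^{*}}_{n}$, since $(\xi\star_{\alpha}\zeta)^{*}=(\xi\alpha\zeta)^{*}=\zeta^{*}\alpha^{*}\xi^{*}=\zeta^{*}\star_{\alpha^{*}}\xi^{*}$. Consequently it maps $\Rr^{\alpha}$-classes to $\Ll^{\alpha^{*}}$-classes and preserves regularity; moreover, since it preserves rank, it carries the regular $\Dd^{\alpha}$-class $\Ddd^{\alpha}_{q}$ (which by Proposition~\ref{p:regClassesPB} consists of the rank-$q$ elements of $\Pp^{\alpha}=\Reg(\PBd^{\alpha}_{n})$) onto the regular $\Dd^{\alpha^{*}}$-class $\Ddd^{\alpha^{*}}_{q}$. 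Here one uses that the description \eqref{e:PsetsPB} is symmetric: $\xi\in\Pp^{\alpha}$ iff $\rank(\xi\alpha)=\rank(\alpha\xi)=\rank(\xi)$, and applying $*$ turns this into $\rank(\alpha^{*}\xi^{*})=\rank(\xi^{*}\alpha^{*})=\rank(\xi^{*})$, i.e.\ $\xi^{*}\in\Pp^{\alpha^{*}}$. It follows that $|\Ddd^{\alpha}_{q}/\Rr^{\alpha}|=|\Ddd^{\alpha^{*}}_{q}/\Ll^{\alpha^{*}}|$. Finally, apply Lemma~\ref{l:ddd/ll} to $\alpha^{*}$: its rank is $r$, and the number of singleton classes of $\ker(\alpha^{*})=\coker(\alpha)$ is exactly $k$, so the right-hand side equals $\mu(n,k,r,q)$, as required.

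I expect no serious obstacle: the only points needing care are the bookkeeping claims about the involution ($\ker(\xi^{*})=\coker(\xi)$, rank preservation, the anti-homomorphism property), all immediate from the diagrammatic description, together with the fact — already noted after the definition of $\mu$ — that $\mu(m,k,t,q)$ does not depend on the order of the two PB-pairs in the join, which is what lets the same function $\mu$ appear on both the $\Ll^{\alpha}$ and the $\Rr^{\alpha}$ side. A fully self-contained alternative, which I would include if the reader is to be spared the involution, is to repeat the proof of Lemma~\ref{l:ddd/ll} verbatim with the roles of the two rows interchanged: an $\Rr^{\alpha}$-class in $\Ddd^{\alpha}_{q}$ is determined by the kernel-domain pair $(\ker(\xi),\dom(\xi))$ (Proposition~\ref{p:regClassesPB}(i)), membership in $\Pp^{\alpha}_{2}$ is governed by $\rank(\alpha\xi)=\rank(\xi)$, i.e.\ by the rank of the join $(\coker(\alpha),\codom(\alpha))\vee(\ker(\xi),\dom(\xi))$, and a bijection $\phi\in\Ss_{n}$ sending $\codom(\alpha)$ to $[r]$ and the classes of $\coker(\alpha)$ to those of $\varepsilon_{n,k}$ transfers the count to $\mu(n,k,r,q)$.
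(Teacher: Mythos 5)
Your proposal is correct and matches the paper's intent: the paper gives no separate proof, simply asserting that the dual statement follows immediately from Lemma \ref{l:ddd/ll}, and your argument via the involution $\xi\mapsto\xi^{*}$ (using $\ker(\alpha^{*})=\coker(\alpha)$ and the symmetry of $\mu$ in the order of the joined PB-pairs) is precisely the standard way to make that duality rigorous. Both of your routes — the $*$-involution and the verbatim row-swapped rerun of the proof of Lemma \ref{l:ddd/ll} — are sound.
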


In the following lemma, we will prove a recurrence describing the numbers $\mu(m,k,r,q)$.
\begin{lem}
\label{l:muRecurrence}
    For $n,k,r,q\in\mathbb{N}$, the numbers $\mu(n,k,r,q)$ satisfy the following recurrence: 
        \begin{enumerate}[(i)]
            \item $\begin{aligned}[t]\mu(n,k,r,q)=&(n-k)\mu(n-2,k,r,q)+\mu(n-1,k-1,r-1,q-1)+\mu(n-1,k-1,r-1,q)+\\
            &(k-r)\mu(n-2,k-2,r-1,q)+(r-1)\mu(n-2,k-2,r-2,q)
            \end{aligned}$\\
            if $n\geq k\geq r\geq q>0$ and $n\equiv k(\mod1 2)$.
            \item $\mu(n,k,r,0)=\sum^{\lfloor \frac{n}{2}\rfloor}_{i=0}\binom{n}{2i}(2i-1)!!$ if $n\geq k\geq r$ and $n\equiv k(\mod1 2)$.
            \item $\mu(n,k,r,q)=0$, otherwise.
\end{enumerate}
\end{lem}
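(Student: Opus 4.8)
I would prove (iii) and (ii) directly from the definition of $\mu$, and (i) by conditioning on the behaviour of the vertex $1$ in the PB-pair $(\eta,X)$ being counted.

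\textbf{Parts (iii) and (ii).} By convention $\mu(n,k,r,q)=0$ whenever $\varepsilon_{n,k}$ is undefined ($n<k$ or $n\not\equiv k\pmod 2$) or $(\varepsilon_{n,k},[r])$ fails to be a PB-pair ($r>k$). If $q>r$ then in any join $(\varepsilon_{n,k},[r])\vee(\eta,X)$ each element of $[r]$ lies in at most one pair of the domain $Z$, so $\rank\le r<q$ and no admissible $(\eta,X)$ with $|X|=q$ exists; hence $\mu(n,k,r,q)=0$. This disposes of (iii), since the complement of the hypotheses of (i) and (ii) is exactly $\{$ not $n\ge k\ge r\ge q\ge 0$, or $n\not\equiv k\pmod 2\}$. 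For (ii), $q=0$ forces $X=\emptyset$, and then the second PB-pair has no domain elements, so every join with $(\varepsilon_{n,k},[r])$ has rank $0$ automatically. Thus $\mu(n,k,r,0)$ is just the number of PB-pairs of the form $(\eta,\emptyset)$ on $[n]$, i.e.\ the number of $1$-$2$-equivalences on an $n$-element set; choosing the $2i$ elements lying in two-element classes and a perfect matching on them gives $\sum_{i=0}^{\lfloor n/2\rfloor}\binom{n}{2i}(2i-1)!!$.

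\textbf{Part (i), the case split.} Assume $n\ge k\ge r\ge q>0$ and $n\equiv k\pmod 2$; in particular $1\in[r]$, so vertex $1$ is a singleton $\varepsilon_{n,k}$-class and a domain element of the first PB-pair. Given $(\eta,X)$ counted by $\mu(n,k,r,q)$, look at the component $P$ of vertex $1$ in the join graph of $(\varepsilon_{n,k},[r])\vee(\eta,X)$: since the only possible edge at vertex $1$ is its $\eta$-edge, $P$ is a path with vertex $1$ as an endpoint. The cases are: (A) $\{1\}$ is an $\eta$-class and $1\notin X$; (B) $\{1\}$ is an $\eta$-class and $1\in X$; (C) $\{1,j\}\in\eta$ with $r<j\le k$; (D) $\{1,j\}\in\eta$ with $2\le j\le r$; (E) $\{1,j\}\in\eta$ with $k<j\le n$. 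I will show (A)–(E) contribute, respectively, the summands $\mu(n-1,k-1,r-1,q)$, $\mu(n-1,k-1,r-1,q-1)$, $(k-r)\mu(n-2,k-2,r-1,q)$, $(r-1)\mu(n-2,k-2,r-2,q)$ and $(n-k)\mu(n-2,k,r,q)$; since the cases are mutually exclusive and exhaustive, adding these proves (i).

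\textbf{Cases (A)–(D).} In (A), $P=\{1\}$ carries only an upward line and is irrelevant to the rank; deleting vertex $1$ (and relabelling $[n]\setminus\{1\}\to[n-1]$ in order) turns $(\varepsilon_{n,k},[r])$ into $(\varepsilon_{n-1,k-1},[r-1])$ and leaves a PB-pair $(\eta',X)$ with $|X|=q$ and unchanged join rank, giving a bijection onto the objects counted by $\mu(n-1,k-1,r-1,q)$. Case (B) is identical except $P=\{1\}$ is now a trivial domain path, so deletion drops the rank by exactly $1$, yielding $\mu(n-1,k-1,r-1,q-1)$. In (C) and (D) the vertex $j$ is a singleton $\varepsilon_{n,k}$-class, so $P=\{1,j\}$ and $1,j\notin X$; $P$ contains no downward line, hence is not a domain path, and deleting both vertices $1$ and $j$ turns $(\varepsilon_{n,k},[r])$ into (after relabelling) $(\varepsilon_{n-2,k-2},[r-1])$ in case (C) (only vertex $1$ lay in the domain) and into $(\varepsilon_{n-2,k-2},[r-2])$ in case (D) (both $1,j$ lay in the domain), preserving the join rank. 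For each of the $k-r$ choices of $j$ in (C), resp.\ the $r-1$ choices in (D), this is a bijection onto the objects counted by $\mu(n-2,k-2,r-1,q)$, resp.\ $\mu(n-2,k-2,r-2,q)$; summing over $j$ gives the claimed coefficients.

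\textbf{Case (E) and the main obstacle.} Here $j$ lies in an $\varepsilon_{n,k}$-pair $\{j,j^{*}\}$ and $P$ begins $1\text{--}j\text{--}j^{*}\text{--}\cdots$. Fix $j$; I would delete vertices $1$ and $j$ and simultaneously \emph{promote} $j^{*}$ to a domain element of the first PB-pair: since $j$ was $j^{*}$'s only $\varepsilon_{n,k}$-partner, $\{j^{*}\}$ becomes a singleton class, so the reduced first PB-pair has $k$ singletons, $\tfrac{n-k}{2}-1$ two-element classes, and domain $\{2,\dots,r\}\cup\{j^{*}\}$ of size $r$. In the join graph $P=1\text{--}j\text{--}j^{*}\text{--}\cdots$ becomes $j^{*}\text{--}\cdots$ with the upward line moved from vertex $1$ to $j^{*}$, so $P$ is a domain path before deletion iff the component of $j^{*}$ is one afterwards, and all other components are untouched; hence the join rank is preserved at $q$. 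This is a bijection from the $(\eta,X)$ of case (E) with $\eta$-class $\{1,j\}$ onto the PB-pairs $(\eta',X)$ with $|X|=q$ whose join with this particular first PB-pair has rank $q$; by the relabelling argument used in the proof of Lemma~\ref{l:ddd/ll} (conjugate by a permutation of $[n-2]$ carrying $\{2,\dots,r\}\cup\{j^{*}\}$ to $[r]$ and fixing the shape of the equivalence), that number equals $\mu(n-2,k,r,q)$, independently of $j$. Summing over the $n-k$ choices of $j$ gives $(n-k)\mu(n-2,k,r,q)$. The delicate point — the main obstacle — is precisely case (E): one must check that ``delete $1$ and $j$, promote $j^{*}$'' is genuinely invertible (the inverse uses that $j^{*}$ is recorded as a first-PB-pair domain element in the reduced configuration, so vertices $1,j$ and the class $\{1,j\}$ can be reinserted unambiguously) and that the target count does not depend on $j$, which is where the earlier symmetry lemma is invoked. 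The remaining bookkeeping — verifying the index shifts of each relabelling, and that degenerate values ($n=k$, $q=r$, $q=1$, $r=q$) are absorbed by the convention $\mu=0$ and by part (ii) — is routine.
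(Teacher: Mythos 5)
Your proposal is correct and follows essentially the same route as the paper: the paper conditions on the role of the vertex $r$ (the last domain element of $(\varepsilon_{n,k},[r])$) in $\eta$ and obtains exactly your five cases with the same coefficients and the same reductions (your choice of vertex $1$ instead of $r$ is immaterial, as both are singleton domain elements of $\varepsilon_{n,k}$). Your treatment of case (E) — deleting $1$ and $j$ and promoting $j^{*}$ to a domain element — is precisely the paper's ``treat $r$, $b$ and $b$'s pair as a single domain element'' step, just spelled out in more detail.
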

\begin{proof}
    Recall that, for $n,k,r,q\in\mathbb{N}$, $\mu(n,k,r,q)$ is the number of PB-pairs $(\eta,X)$ such that $|X|=q$ and the rank of the join $(\varepsilon_{n,k},[r])\vee(\eta,X)$ is $q$. Consider the PB-pair $(\varepsilon_{n,k},[r])$. We may present it visually as in the example in Figure \ref{f:sigmacoker}.
    \begin{itemize}
            \item If $r\geq q>0$, consider the element $r$. The question is: what role can be played by this element in $\eta$? We have five cases:
                \begin{description}
                    \item[Case 1:] The element $r$ is an element of $X$ (the domain of our pair). Then, we have chosen one of the $q$ elements of $X$, so the remaining $n-1$ elements can be connected to construct a suitable PB-pair in $\lambda(n-1,k-1,r-1,q-1)$ ways.
                    \item[Case 2:] The element $r$ is a member of a singleton $\eta$-class, outside of $X$. Then, the remaining $n-1$ elements can be connected to construct a suitable PB-pair in $\lambda(n-1,k-1,r-1,q)$ ways.
                    \item[Case 3:] The element $r$ is connected to an element $b$ belonging to a two-element $\varepsilon_{n,k}$-class (i.e.\ $b\in\{k+1,\ldots,n\}$). There are $n-k$ such elements. In that case, these three elements ($r$, $b$, and $b$'s pair) may be considered as a single domain element. This element and the remaining $n-3$ elements can be connected to form a suitable PB-pair in $\lambda(n-2,k,r,q)$ ways.
                    \item[Case 4:] The element $r$ is connected to an element $b$, which forms a singleton $\varepsilon_{n,k}$-class and is outside of $[r]$ (i.e.\ $b\in\{r+1,\ldots,k\}$). There are $k-r$ such elements. Then, the remaining $n-2$ elements can be connected to construct a suitable PB-pair in $\lambda(n-2,k-2,r-1,q)$ ways.
                    \item[Case 5:] The element $r$ is connected to an element $b$, which belongs to $[r-1]$. Obviously, there are $r-1$ such elements). Then, the remaining $n-2$ elements can be connected to construct a suitable PB-pair in $\lambda(n-2,k-2,r-2,q)$ ways.
                \end{description}
                These cases are depicted in Figure \ref{f:fiveCases}. We add up the values in these five cases, and obtain the same recurrence as in the case $(i)$.
            \item If $q=0$, we fix $X=\emptyset$, and count all the possible 1-2-partitions of the set $[n]$. Note that the number of two-element classes can be any number between $0$ and $\lfloor \frac{n}{2}\rfloor$. If there are $i$ such classes, then the vertices belonging to these classes can be chosen in $\binom{n}{2i}$ ways (the remaining vertices form singleton $\varepsilon$-classes). These $2i$ elements can be paired in $(2i-1)!!$ ways. We obtain the same formula as in the case $(ii)$.
            \item If $r<q$, there exists no PB-pair that can generate a $q$-domain join with $(\varepsilon_{n,k},[r])$. Thus, in this case we have $\mu(n,k,r,q)=0$.
            \item If either $n\geq k\geq r$ or $n\equiv k(\mod1 2)$ is false, then these numbers do not correspond to any PB-pair, so in this case we have $\mu(n,k,r,q)=0$. This case and the previous one correspond to the part $(iii)$.
        \end{itemize}
    Thus, the numbers $\mu(n,k,r,q)$ satisfy the stated recurrence, and the result follows.
\end{proof}

\begin{figure}
\begin{center}
        \includegraphics[width=0.7\textwidth]{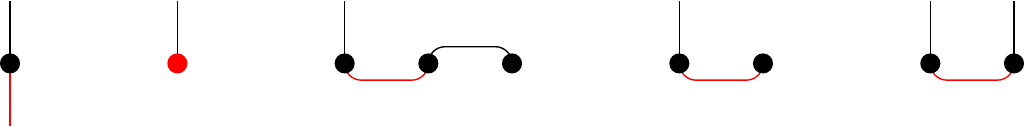}
        \caption{Cases 1--5 in the proof of Lemma \ref{l:muRecurrence}. The connections from $\varepsilon_{n,k}$ are coloured black, and those from $\eta$ are coloured red.} 
        \label{f:fiveCases}
\end{center}
\end{figure}

Now, we investigate the properties of the numbers $\mu(n,k,r,q)$.
\begin{lem}
\label{l:muInEq}
    Let $n,k,r,q\in\mathbb{N}$ with $n\geq k\geq r\geq q\geq 1$ and $n\equiv k(\mod1 2)$. If $n\geq k+2$, then
    $$\mu(n,k,r,q)>\mu(n,k+2,r,q).$$
\end{lem}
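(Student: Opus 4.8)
The plan is to reformulate $\mu$ as the cardinality of a concrete set of PB‑pairs and then to show that passing from $k$ to $k+2$ shrinks this set, the shrinkage being strict. First I would note that under the hypotheses $n\geq k\geq r\geq q\geq 1$, $n\geq k+2$ and $n\equiv k\ (\mod1 2)$, both $(\varepsilon_{n,k},[r])$ and $(\varepsilon_{n,k+2},[r])$ are genuine PB-pairs: the first $r$ vertices are singleton classes of each equivalence (as $r\leq k\leq k+2$), and $k\equiv k+2\equiv n\ (\mod1 2)$, so neither $\mu(n,k,r,q)$ nor $\mu(n,k+2,r,q)$ vanishes by Lemma \ref{l:muRecurrence}(iii). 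Let $\mathcal{A}_k$ (resp. $\mathcal{A}_{k+2}$) be the set of PB-pairs $(\eta,X)$ on $[n]$ with $|X|=q$ for which the join $(\varepsilon_{n,k},[r])\vee(\eta,X)$ (resp. $(\varepsilon_{n,k+2},[r])\vee(\eta,X)$) has rank $q$; then $\mu(n,k,r,q)=|\mathcal{A}_k|$ and $\mu(n,k+2,r,q)=|\mathcal{A}_{k+2}|$, and both sets are finite, so it suffices to prove the proper inclusion $\mathcal{A}_{k+2}\subsetneq\mathcal{A}_k$.

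The inclusion $\mathcal{A}_{k+2}\subseteq\mathcal{A}_k$ is the conceptual heart of the argument. The only difference between $\varepsilon_{n,k}$ and $\varepsilon_{n,k+2}$ is that the two-element class $\{k+1,k+2\}$ of $\varepsilon_{n,k}$ is replaced by the two singletons $\{k+1\}$, $\{k+2\}$ in $\varepsilon_{n,k+2}$; hence $\varepsilon_{n,k+2}$ refines $\varepsilon_{n,k}$, and therefore $\varepsilon_{n,k+2}\vee\eta$ refines $\varepsilon_{n,k}\vee\eta$ for every equivalence $\eta$. Consequently, whenever $x\in[r]$ and $y\in X$ lie in a common class of $\varepsilon_{n,k+2}\vee\eta$ they also lie in a common class of $\varepsilon_{n,k}\vee\eta$, so the domain of the join with $(\varepsilon_{n,k+2},[r])$ is contained in the domain of the join with $(\varepsilon_{n,k},[r])$. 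Since each element of $X$ occurs in at most one pair of either domain, both domains have size at most $|X|=q$; thus if the former has size $q$, so does the latter. This shows $\mathcal{A}_{k+2}\subseteq\mathcal{A}_k$, whence $\mu(n,k+2,r,q)\leq\mu(n,k,r,q)$.

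For strictness I would exhibit an element of $\mathcal{A}_k\setminus\mathcal{A}_{k+2}$. Put $X=\{2,3,\dots,q\}\cup\{k+2\}$, a set of size $q$ (legitimate since $q\leq r\leq k$; when $q=1$ this is just $\{k+2\}$), and let $\eta$ be the 1-2-equivalence on $[n]$ whose only non-singleton class is $\{1,k+1\}$. Then $(\eta,X)$ is a PB-pair, since every element of $X$ is a singleton class of $\eta$. In $\varepsilon_{n,k}\vee\eta$ the class of $k+2$ is $\{1,k+1,k+2\}$ (as $1$ and $k+1$ are $\eta$-related while $k+1$ and $k+2$ are $\varepsilon_{n,k}$-related), which contains $1\in[r]$, and each $i\in\{2,\dots,q\}$ forms a singleton class lying in $[r]$; so the domain of $(\varepsilon_{n,k},[r])\vee(\eta,X)$ is $\{(1,k+2)\}\cup\{(i,i):2\leq i\leq q\}$, which has size $q$, so $(\eta,X)\in\mathcal{A}_k$. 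On the other hand $k+2$ is a singleton class of both $\varepsilon_{n,k+2}$ and $\eta$, hence an isolated point of $\varepsilon_{n,k+2}\vee\eta$, and $k+2\notin[r]$ (since $k+2>k\geq r$); therefore $k+2\in X$ occurs in no pair of the domain of $(\varepsilon_{n,k+2},[r])\vee(\eta,X)$, which thus has rank at most $q-1<q$. So $(\eta,X)\notin\mathcal{A}_{k+2}$, giving $\mathcal{A}_{k+2}\subsetneq\mathcal{A}_k$ and $\mu(n,k+2,r,q)<\mu(n,k,r,q)$, as required.

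I expect the refinement observation in the second paragraph to be the step carrying the real content; after that, the remaining work — verifying that the PB-pairs $(\varepsilon_{n,k},[r])$, $(\varepsilon_{n,k+2},[r])$, $(\eta,X)$ are all legitimate and in the intended parameter range, and computing the two joins for the witness — is routine bookkeeping. (A more computational alternative would be to compare the two instances of the recurrence in Lemma \ref{l:muRecurrence}(i) term by term and induct on $n$, but the terms do not align cleanly and this forces several boundary cases, so the inclusion argument above seems preferable.)
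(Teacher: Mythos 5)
Your proposal is correct and follows essentially the same route as the paper: both establish the non-strict inequality by showing every PB-pair counted by $\mu(n,k+2,r,q)$ is counted by $\mu(n,k,r,q)$ (the paper via the structure of domain paths, you via the observation that $\varepsilon_{n,k+2}$ refines $\varepsilon_{n,k}$ together with the bound $|Z|\leq|X|=q$ --- two phrasings of the same fact), and both then obtain strictness from an explicit witness. Your witness $X=\{2,\dots,q\}\cup\{k+2\}$ with non-singleton $\eta$-class $\{1,k+1\}$ is the paper's witness up to relabelling ($1\leftrightarrow q$), and your verification of it is correct.
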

\begin{proof}
    Suppose $n,k,r,q\in\mathbb{N}$ satisfy all the assumptions of the lemma. Again, recall that $\mu(n,k,r,q)$ is the number of PB-pairs $(\eta,X)$ such that:
    \begin{itemize}
        \item[(I)] $|X|=q$ and $\rank((\varepsilon_{n,k},[r])\vee(\eta,X))=q$.
    \end{itemize}
    Similarly, $\mu(n,k+2,r,q)$ is the number of PB-pairs $(\eta,X)$ such that:
    \begin{itemize}
        \item[(II)] $|X|=q$ and $\rank((\varepsilon_{n,k+2},[r])\vee(\eta,X))=q$.
    \end{itemize}
    The PB pairs $(\varepsilon_{n,k},[r])$ and $(\varepsilon_{n,k+2},[r])$ are illustrated in Figure \ref{f:Epsilon_nk}, using an example.
    
\begin{figure}
\begin{center}
        \includegraphics[width=\textwidth]{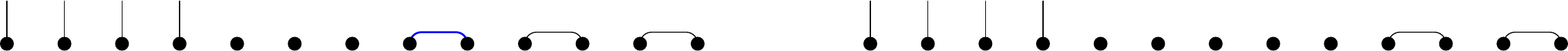}
        \caption{PB-pairs $(\varepsilon_{13,7},[4])$ (left) and $(\varepsilon_{13,9},[4])$ (right). Note that the only difference is the upper hook connecting $8$ and $9$ (coloured blue) in the first pair.} 
        \label{f:Epsilon_nk}
\end{center}
\end{figure}
    
    First, we will prove that all PB-pairs on $[n]$ satisfying (II) also satisfy (I). Recall from the discussion in Section \ref{s:PartialBrauerMonoids} that the domain elements of the join of two PB-pairs correspond to the paths connecting elements of their domains. Thus, in both cases we consider the domain paths connecting the elements of $[r]$ and $X$. Hence, these paths are of the form \ref{(P)}, so none of them contains elements that belong to non-domain singletons in either of the factors. Let $(\eta,X)$ be a PB-pair on $[n]$ satisfying (II). By the previous discussion, elements $k+1$ and $k+2$ do not belong to any of these domain paths in the join $(\varepsilon_{n,k+2},[r])\vee(\eta, X)$. So, all the $q$ domain paths exist also in the join $(\varepsilon_{n,k},[r])\vee(\eta, X)$. Furthermore, the connection of $k+1$ and $k+2$ in $\varepsilon_{n,k}$ does not generate a new domain path in the join $(\varepsilon_{n,k},[r])\vee(\eta, X)$, as we have $|X|=q$ and no element of $X$ can appear in two domain paths, so the join has at most $q$ domain paths. Thus, we have
    $$\mu(n,k,r,q)\geq\mu(n,k+2,r,q).$$
    
    We need to prove the strict inequality. Consider the PB-pair $(\eta,X)$, where $X=\{1,\ldots,q-1,k+2\}$ and $\eta$ is the equivalence on $[n]$ with a unique non-trivial class $\{q,k+1\}$. It is easily seen that $(\eta,X)$ is a PB-pair of rank $q$. In Figure \ref{f:lemma44}, we present the joins of $(\eta,X)$ with $\varepsilon_{n,k}$ and $\varepsilon_{n,k+2}$, respectively. 
    \begin{figure}
        \includegraphics[width=\textwidth]{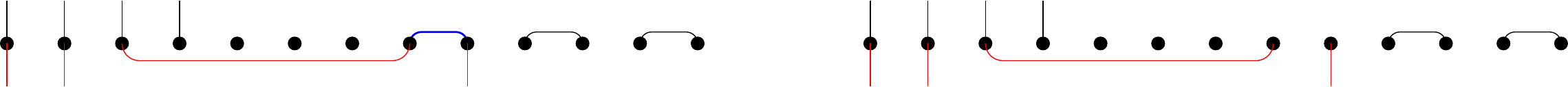}
        \caption{The joins of PB-pairs $(\varepsilon_{13,7},[4])$ (left) and $(\varepsilon_{13,9},[4])$ (right) with the pair $(\eta,X)$ in the case $q=3$. The connections belonging to the PB-pairs $(\varepsilon_{13,7},[4])$ and $(\varepsilon_{13,9},[4])$ are coloured black, and those that belong to $(\eta,X)$ are red.} 
        \label{f:lemma44}
    \end{figure}
    Note that the ranks of these joins are $q$ and $q-1$, respectively. Thus, the PB-pair $(\xi,Y)$ satisfies (I), but not (II).
    Therefore, we may conclude that
    $$\mu(n,k,r,q)>\mu(n,k+2,r,q).\qedhere$$
\end{proof}

Note that Lemma \ref{l:muInEq} applies only in the case when $r\geq 1$. Namely, it is false when $r=0$, since Lemma \ref{l:muRecurrence}(ii) implies that $\mu(n,k,r,0)$ depends only on $n$. Thus, the case $r=0$ will be discussed separately. In order to prove the classification result for $r\geq1$, we prove the following:
\begin{lem}
\label{l:Eq->isom}
    Let $n\in\mathbb{N}$, and let $\alpha,\beta\in\PBd_{n}$ with $\rank(\alpha)=\rank(\beta)$. In addition, write $k$ and $l$ for the number of singleton classes in $\ker(\alpha)$ and $\ker(\beta)$, respectively. Similarly, write $p$ and $w$ for the number of singleton classes in $\coker(\alpha)$ and $\coker(\beta)$, respectively.  If $k=l$ and $p=w$, then $\PBd^{\alpha}_{n}\cong\PBd^{\beta}_{n}$.
\end{lem}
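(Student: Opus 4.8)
The plan is to produce an explicit isomorphism $\PBd^{\alpha}_{n}\to\PBd^{\beta}_{n}$, exploiting the fact that the group of units $G(\PBd_{n})$ is the symmetric group $\mathcal{S}_{n}$. The argument has two parts. First, a general observation: if $\beta=u\alpha v$ for some units $u,v\in G(\PBd_{n})$, then $\PBd^{\alpha}_{n}\cong\PBd^{\beta}_{n}$. Second, a combinatorial check that the hypotheses $\rank(\alpha)=\rank(\beta)$, $k=l$ and $p=w$ guarantee the existence of such units $u,v$.

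For the first part, I claim that the map $\phi\colon\PBd^{\alpha}_{n}\to\PBd^{\beta}_{n}$ defined by $\phi(x)=v^{-1}xu^{-1}$ is an isomorphism whenever $\beta=u\alpha v$ with $u,v\in G(\PBd_{n})$. It is clearly a bijection of the underlying set $\PBd_{n}$, being a composite of left and right translations by units. For the operations, $\phi(x\star_{\alpha}y)=\phi(x\alpha y)=v^{-1}x\alpha yu^{-1}$, whereas $\phi(x)\star_{\beta}\phi(y)=\phi(x)\,\beta\,\phi(y)=(v^{-1}xu^{-1})(u\alpha v)(v^{-1}yu^{-1})=v^{-1}x\alpha yu^{-1}$, using $u^{-1}u=vv^{-1}=\id$; so $\phi$ respects the sandwich products. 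It then remains only to find $g,h\in\mathcal{S}_{n}$ with $\beta=g\alpha h$.

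For the second part, recall that left (right) multiplication by a permutation merely relabels the upper (lower) row of a diagram without otherwise altering its combinatorial shape; hence the orbit of $\alpha$ under the action $(g,h)\cdot\alpha=g\alpha h$ of $\mathcal{S}_{n}\times\mathcal{S}_{n}$ is exactly the set of all $\gamma\in\PBd_{n}$ having the same \emph{type} as $\alpha$, i.e.\ the same number of transversals, of upper hooks, of upper singleton blocks, of lower hooks and of lower singleton blocks. Now put $r=\rank(\alpha)=\rank(\beta)$. A transversal of $\alpha$ contributes a singleton class to $\ker(\alpha)$, an upper hook contributes a two-element class, and the upper singleton blocks contribute the remaining singleton classes; counting, $\alpha$ has exactly $k-r$ upper singleton blocks and $\tfrac{1}{2}(n-k)$ upper hooks, and likewise $\beta$ has $l-r$ upper singleton blocks and $\tfrac{1}{2}(n-l)$ upper hooks. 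The dual count applied to $\coker(\alpha)$ and $\coker(\beta)$ expresses the numbers of lower singleton blocks and lower hooks in terms of $p$, $w$ and $n$. Thus $k=l$ and $p=w$ force $\alpha$ and $\beta$ to have the same type, so $\beta=g\alpha h$ for suitable $g,h\in\mathcal{S}_{n}$, and the first part then yields $\PBd^{\alpha}_{n}\cong\PBd^{\beta}_{n}$.

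The step I expect to require the most care is the identification, in the last paragraph, of the $\mathcal{S}_{n}\times\mathcal{S}_{n}$-orbit of $\alpha$ with its type-class: one fixes a bijection of $[n]$ carrying the upper data of $\alpha$ onto that of $\beta$ (transversal vertices to transversal vertices, compatibly with a chosen matching of the two sets of transversals; hooks to hooks; singletons to singletons) together with an analogous bijection of the lower data, and must verify that the corresponding pair of permutations genuinely conjugates $\alpha$ to $\beta$ — in particular that the chosen matching of transversals can be realised simultaneously with the relabelling of the non-transversal blocks. This is elementary diagram bookkeeping, but it is where the actual work lies.
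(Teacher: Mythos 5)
Your proposal is correct and follows essentially the same route as the paper: the paper also observes that the hypotheses force $\beta=\pi_{1}\alpha\pi_{2}$ for some $\pi_{1},\pi_{2}\in\mathcal{S}_{n}=G(\PBd_{n})$ and then uses the map $x\mapsto\pi_{2}^{-1}x\pi_{1}^{-1}$ as the isomorphism. In fact you supply more detail than the paper does, both in verifying that this map respects the sandwich products and in the type-counting argument (upper/lower hooks and singletons) that produces the two permutations.
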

\begin{proof}
    Suppose that $k=l$ and $p=w$. 
    From these equalities and $\rank(\alpha)=\rank(\beta)$, we have $\beta=\pi_{1}\alpha\pi_{2}$ for some permutations $\pi_{1},\pi_{2}\in\mathcal{S}_{n}=G(\PBd_{n})$. Then, $\alpha\mapsto \pi^{-1}_{2}\alpha\pi^{-1}_{1}$ determines an isomorphism $\PBd^{\alpha}_{n}\rightarrow\PBd^{\beta}_{n}$.
\end{proof}
 
Now, we want to prove the classification result. In order to to that, we will need the size of the underlying set of the variant $\PBd^{\alpha}_{n}$. As in \cite{SandwichIII}, we define the numbers $a(k)$ by $a(0)=a(1)=1$ and $a(k)=a(k-1)+(k-1)a(k-2)$ for $k\geq 2$. It is easy to see that $a(k)\geq 1$ for all $k\in\mathbb{N}$. Thus, $a(k)>a(k-1)$ for all $k\geq 2$.
In \cite[Proposition 4.4]{SandwichIII} it was noted that 
\begin{equation}
\label{e:cardPBm}
    |\PBd_{n}|=a(2n).
\end{equation}

\begin{thm}
\label{t:IsomClass>0}
    Let $m,n\in\mathbb{N}$, and let $\alpha\in\PBd_{m}$ and $\beta\in\PBd_{n}$ with $r=\rank(\alpha)\geq 1$ and $s=\rank(\beta)\geq 1$. In addition, write $k$ and $l$ for the number of singleton classes in $\ker(\alpha)$ and $\ker(\beta)$, respectively. Similarly, write $p$ and $w$ for the number of singleton classes in $\coker(\alpha)$ and $\coker(\beta)$, respectively.  Then $\PBd^{\alpha}_{m}\cong\PBd^{\beta}_{n}$ if and only if $m=n$, $k=l$, $p=w$, and $r=s$.
\end{thm}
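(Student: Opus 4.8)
The plan is to prove the two directions separately, the converse (``only if'') being where the real work lies. For the forward direction I would simply observe that $m=n$, $k=l$, $p=w$ and $r=s$ give in particular $\rank(\alpha)=\rank(\beta)$, so Lemma \ref{l:Eq->isom} applies verbatim and delivers $\PBd^{\alpha}_{m}\cong\PBd^{\beta}_{n}$.

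For the ``only if'' direction, assume $\Phi\colon\PBd^{\alpha}_{m}\to\PBd^{\beta}_{n}$ is an isomorphism, and recover the four equalities in the following order. First, $m=n$: since $\rank(\alpha)\ge 1$ the diagram $\alpha$ has a transversal, forcing $m\ge 1$, and likewise $n\ge 1$; as $\Phi$ is a bijection between the underlying sets, $a(2m)=|\PBd_{m}|=|\PBd_{n}|=a(2n)$ by (\ref{e:cardPBm}), and since $a$ is strictly increasing on the positive integers while $2m,2n\ge 2$, we conclude $2m=2n$. Write $n$ for the common value. Second, $r=s$: an isomorphism preserves Green's relations, regularity, and the $\leq_{\Jj}$-order among $\Jj$-classes, so $\Phi$ induces an order isomorphism between the posets of regular $\Jj^{\alpha}$-classes and of regular $\Jj^{\beta}$-classes; by Proposition \ref{p:regClassesPB} these are the finite chains $\Ddd^{\alpha}_{0}<\cdots<\Ddd^{\alpha}_{r}$ and $\Ddd^{\beta}_{0}<\cdots<\Ddd^{\beta}_{s}$, so comparing lengths gives $r=s$, and moreover the (unique) order isomorphism sends $\Ddd^{\alpha}_{q}$ to $\Ddd^{\beta}_{q}$ for every $0\le q\le r$.

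Third, and this is the crux, $k=l$ and $p=w$. Fix any $q$ with $1\le q\le r$ (possible because $r\ge 1$). Since $\Phi$ restricts to a bijection $\Ddd^{\alpha}_{q}\to\Ddd^{\beta}_{q}$ which preserves $\Ll$ and, separately, $\Rr$, it follows that $|\Ddd^{\alpha}_{q}/\Ll^{\alpha}|=|\Ddd^{\beta}_{q}/\Ll^{\beta}|$ and $|\Ddd^{\alpha}_{q}/\Rr^{\alpha}|=|\Ddd^{\beta}_{q}/\Rr^{\beta}|$. Feeding these into Lemmas \ref{l:ddd/ll} and \ref{l:ddd/rr} (and using $r=s$) yields
$$\mu(n,k,r,q)=\mu(n,l,r,q)\qquad\text{and}\qquad\mu(n,p,r,q)=\mu(n,w,r,q).$$
I would then note that $k,l$ (respectively $p,w$) all lie in $[r,n]$ — each of the $r$ elements of the domain (respectively codomain) occupies a singleton class of $\ker$ (respectively $\coker$) — and are all congruent to $n$ modulo $2$, since the non-singleton classes partition the remaining elements of $[n]$ into pairs. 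Along this arithmetic progression with common difference $2$, Lemma \ref{l:muInEq} says $\kappa\mapsto\mu(n,\kappa,r,q)$ is strictly decreasing, hence injective; therefore $k=l$ and $p=w$, completing the proof.

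The main obstacle is this third step: one needs a homomorphism invariant sensitive enough to pin down the kernel- and cokernel-singleton counts, and the only available candidate is the profile $q\mapsto\bigl(|\Ddd^{\alpha}_{q}/\Ll^{\alpha}|,|\Ddd^{\alpha}_{q}/\Rr^{\alpha}|\bigr)$ produced by Lemmas \ref{l:ddd/ll}--\ref{l:ddd/rr}; converting equality of these counts into equality of the parameters is exactly the content of the strict monotonicity in Lemma \ref{l:muInEq}. This is also precisely where the hypothesis $r\ge 1$ is used: when $r=0$, Lemma \ref{l:muRecurrence}(ii) shows $\mu(n,k,0,0)$ depends only on $n$, so the invariant becomes blind to $k$ and $p$, and the rank-zero case must be handled by the separate methods of Section \ref{s:rank_zero}.
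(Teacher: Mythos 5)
Your proposal is correct and follows essentially the same route as the paper's own proof: cardinality via $a(2n)$ gives $m=n$, the chain of regular $\Jj$-classes gives $r=s$, and the counts $|\Ddd_{q}/\Ll|$, $|\Ddd_{q}/\Rr|$ combined with the strict monotonicity of Lemma \ref{l:muInEq} give $k=l$ and $p=w$ (the paper specialises to $q=r$, you allow any $1\le q\le r$). Your extra remarks that $k,l\ge r$, that $k\equiv l\equiv n\ (\mod1 2)$, and that the order isomorphism of chains matches $\Ddd^{\alpha}_{q}$ with $\Ddd^{\beta}_{q}$ are welcome justifications of steps the paper leaves implicit.
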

\begin{proof}
    The reverse implication follows from Lemma \ref{l:Eq->isom}.
    
    Conversely, suppose that $\PBd^{\alpha}_{m}\cong\PBd^{\beta}_{n}$. 
    As $|\PBd_m|=a(2m)$ and $|\PBd_n|=a(2n)$, and $\mathbb{N}\rightarrow\mathbb{N}:x\mapsto a(2x)$ is an increasing function, we have $m=n$.
    Further, by Lemma \ref{p:regClassesPB}, the variants $\PBd^{\alpha}_{m}$ and $\PBd^{\beta}_{n}$ have $r+1$ regular $\Dd^{\alpha}$-classes and $s+1$ regular $\Dd^{\beta}$-classes, respectively. Since isomorphisms preserve regularity and Green's classes, we have $r=s$.
    By Lemma \ref{l:ddd/ll}, in variants $\PBd^{\alpha}_{m}$ and $\PBd^{\beta}_{n}$, for $q=r$ we have
    $$\mu(n,k,r,r)=|\Ddd^{\alpha}_{r}/\Ll^{\alpha}|\ \ \ \text{and}\ \ \  |\Ddd^{\beta}_{q}/\Ll^{\beta}|=\mu(n,l,r,r),$$
    respectively. Since $r\geq 1$ and isomorphism preserves the number of these $\Ll$-classes, Lemma \ref{l:muInEq} gives $k=l$.
    Similarly, Lemma \ref{l:ddd/rr} and Lemma \ref{l:muInEq} give $p=w$.
\end{proof}

\begin{rem}
    Note that, in the proof of Theorem \ref{t:IsomClass>0}, we have shown that $\PBd^{\alpha}_{m}\cong\PBd^{\beta}_{n}$ implies $m=n$ and $r=s$ (this part of the proof does not require the assumption $r,s\geq 1$).
\end{rem}

\section{Sandwich element of rank zero}
\label{s:rank_zero}

Our proof of the classification result does not apply in the case when the sandwich element has rank $0$. We consider this case separately. It is natural to expect that the same criterion holds:
\begin{hyp}
\label{h:rank0}
    Let $n\in\mathbb{N}$, and let $\alpha,\beta\in\PBd_{n}$ with $\rank(\alpha)=\rank(\beta)=0$. In addition, write $k$ and $l$ for the number of singleton classes in $\ker(\alpha)$ and $\ker(\beta)$, respectively. Similarly, write $p$ and $w$ for the number of singleton classes in $\coker(\alpha)$ and $\coker(\beta)$, respectively. Then $\PBd^{\alpha}_{n}\cong\PBd^{\beta}_{n}$ if and only if $k=l$ and $p=q$.
\end{hyp}

The reverse implication was proved in Lemma \ref{l:Eq->isom}. However, for the direct implication, our previous approach does not work, since we have $\mu(n,k,0,0)=\mu(n,l,0,0)$ even if $k\neq l$. Furthermore, since $\rank(\alpha)=\rank(\beta)=0$, by (\ref{e:PsetsPB}) we have
\begin{equation*}
    \Pp^{\alpha}_{1}=\Pp^{\alpha}_{2}=\Pp^{\alpha}=\{\sigma\in\PBd_{n}:\rank(\sigma)=0\}=\Pp^{\beta}=\Pp^{\beta}_{1}=\Pp^{\beta}_{2},   
\end{equation*}
which is the unique regular $\Dd^{\alpha}$- and $\Dd^{\beta}$-class in $\PBd^{\alpha}_{n}$ and $\PBd^{\beta}_{n}$, respectively. It is easily seen that this set is a rectangular band, where the sandwich operation coincides with the original operation (for any sandwich element).
Therefore, we cannot hope for any progress by considering the regular subsemigroup. In addition, all the elements of $\PBd_{n}\setminus\Pp^{\alpha}$ form singleton $\Jj^{\alpha}$-classes ($\Jj^{\beta}$-classes), which are above the regular $\Dd^{\alpha}=\Jj^{\alpha}$-class ($\Dd^{\beta}=\Jj^{\beta}$-class), and no other pair of $\Jj^{\alpha}$-classes ($\Jj^{\beta}$-classes) are related. For this reason, even if $k\neq l$, the $\leq_{\Jj}$-structure of the semigroups is the same.

In spite of these similarities, these variants are not necessarily isomorphic. To show that, we need to consider the Green's preorders $\leq_{\Ll^{\alpha}}$ and $\leq_{\Rr^{\alpha}}$ on $\PBd^{\alpha}_{n}$.
Since $\id_{n}$ is the left- and right- identity for $\PBd_{n}$, \cite[Remark 3.8]{SandwichIII} gives 
\begin{equation}
\label{e:pom1}
    \sigma\leq_{\Rr^{\alpha}}\tau\Leftrightarrow \sigma\leq_{\Rr}\tau\alpha \ \ \ \ \ \text{and}\ \ \ \ \ \sigma\leq_{\Rr^{\beta}}\tau\Leftrightarrow \sigma\leq_{\Rr}\tau\beta,
\end{equation}
for $\sigma\in\Pp^{\alpha}_{1}=\Pp^{\alpha}=\Pp^{\beta}=\Pp^{\beta}_{1}$ and $\tau\in\PBd_{n}$. From \cite[Theorem 8]{PartBr6} follows the characterisation of relation $\leq_{\Rr}$ in $\PBd_{n}$:
\begin{equation}
\label{e:pom2}
    \sigma\leq_{\Rr}\tau\Leftrightarrow \Nu(\sigma)\supseteq\Nu(\tau)    
\end{equation}
for $\sigma,\tau\in\PBd_{n}$.
Now, we provide a simple example in which the difference in the case $k\neq l$ is easily seen:
\begin{exm}\label{ex:kraj}
    Let $n=2$ and consider $\alpha=\disppartnx{2}{2}{
                                   \darcx12{.2}
    }$ and
    $\beta=\disppartnx{2}{2}{
          \uarcx12{.2}
          \darcx12{.2}
    }$. 
    Then, $k=2$, $l=0$ and $p=w=0$. Using (\ref{e:pom1}) and (\ref{e:pom2}), one can easily verify that Figure \ref{f:nonisomRank0} depicts the $\leq_{\Rr^{\alpha}}$-relations and $\leq_{\Rr^{\beta}}$-relations in $\PBd^{\alpha}_{2}$ and $\PBd^{\beta}_{2}$, respectively.
    Since isomorphisms preserve Green's preorders and Green's classes, we clearly have $\PBd^{\alpha}_{2}\not\cong\PBd^{\beta}_{2}$.
    \begin{figure}[h]
    \begin{center}
        \includegraphics[width=0.85\textwidth]{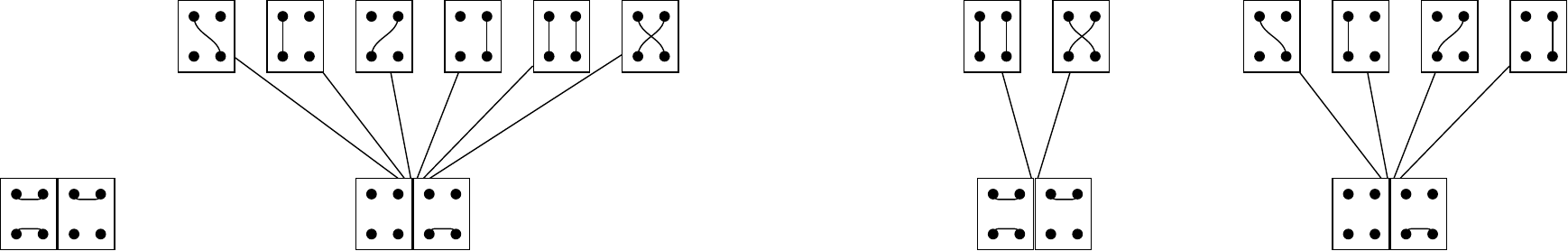}
        \caption{The relations among the $\Rr^{\alpha}$-classes of $\PBd^{\alpha}_{2}$ (left) and the relations among the $\Rr^{\beta}$-classes of $\PBd^{\beta}_{2}$ (right).}
        \label{f:nonisomRank0}
    \end{center}
    \end{figure}
\end{exm}

Thus, in order to prove Hypothesis \ref{h:rank0}, one should consider the distribution of non-regular $\Rr^{\alpha}$-classes above the regular ones and symmetrically, the distribution of non-regular $\Ll^{\alpha}$-classes above the regular ones. 

We give an additional result which might prove useful in this direction of investigation. Recall from \cite{Tamura} that a semigroup $S$ is an \emph{inflation} of a semigroup $T$ if $T$ is a subsemigroup of $S$ and there exists a mapping $\phi:S\rightarrow T$ such that $\phi(\sigma)=\sigma$ for $\sigma\in T$ and
\begin{equation*}
    \sigma\tau=\phi(\sigma)\phi(\tau) \hspace{2cm} \text{for } \sigma,\tau\in S.
\end{equation*}
Now, we may prove:
\begin{prop}\label{p:Walter}
    Let $n\in\mathbb{N}$ and let $\alpha\in\PBd_{n}$ with $\rank(\alpha)=0$. Then, the variant $\PBd^{\alpha}_{n}$ is the inflation of the rectangular band
    \begin{equation}
    \label{e:regPart}
        \Reg(\PBd^{\alpha}_{n})=\Pp^{\alpha}=\{\sigma\in\PBd_{n}:\rank(\sigma)=0\}
    \end{equation}
    along the map $\phi:\PBd_{n}\rightarrow\Pp^{\alpha}:\xi\mapsto \xi\alpha\xi$.
\end{prop}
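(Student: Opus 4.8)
The plan is to verify the three defining conditions of an inflation directly. Write $E=\Pp^{\alpha}=\{\sigma\in\PBd_{n}:\rank(\sigma)=0\}$; by (\ref{e:regPart}) this is $\Reg(\PBd^{\alpha}_{n})$, the rectangular band in the statement. I must show: (1)~$E$ is closed under $\star_{\alpha}$, so it is a subsemigroup of $\PBd^{\alpha}_{n}$; (2)~$\phi$ maps $\PBd_{n}$ into $E$ and $\phi(\sigma)=\sigma$ for $\sigma\in E$; (3)~$\sigma\star_{\alpha}\tau=\phi(\sigma)\star_{\alpha}\phi(\tau)$ for all $\sigma,\tau\in\PBd_{n}$. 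Everything will rest on a single structural observation about the product in $\PBd_{n}$: \emph{if $\rank(\sigma)=0$, then for every $\beta\in\PBd_{n}$ the product $\sigma\beta$ again has rank $0$ and its blocks contained in the upper row are exactly the blocks of $\sigma$ contained in the upper row; dually, if $\rank(\tau)=0$ then $\beta\tau$ has rank $0$ and its blocks contained in the lower row are exactly those of $\tau$.} This is immediate from the product-diagram description: if $\sigma$ has no transversal, then in $\Pi(\sigma,\beta)$ no top-row vertex is joined to a middle-row vertex, so the connected components meeting the top row are precisely the (size at most $2$) upper blocks of $\sigma$, and none of them reaches the middle or the bottom row; the dual statement is proved the same way using the lower copy of $\tau$.

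Three consequences come out at once. First, a product of two rank-$0$ partitions has rank $0$, so $E$ is closed under the product of $\PBd_{n}$; since moreover $\rank(\alpha)=0$, the observation gives $\xi\alpha\in E$ for every $\xi$, and then $(\xi\alpha)\xi\in E$, i.e.\ $\phi$ indeed maps into $E$, and $E$ is closed under $\star_{\alpha}$. Second, every $\gamma\in E$ is idempotent in $\PBd_{n}$: applying the observation to $\gamma\cdot\gamma$ shows that $\gamma^{2}$ and $\gamma$ have the same upper blocks, the same lower blocks, and both rank $0$, and a rank-$0$ partial Brauer partition is determined by these data, so $\gamma^{2}=\gamma$. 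Third, for $\sigma\in E$ one has $\phi(\sigma)=\sigma\alpha\sigma=\sigma$: writing $\sigma\alpha\sigma=(\sigma\alpha)\sigma$ with $\sigma\alpha\in E$, the observation shows the upper blocks of $\phi(\sigma)$ equal those of $\sigma\alpha$, hence (observation again, for the left factor $\sigma$) those of $\sigma$, while its lower blocks equal those of $\sigma$; so $\phi(\sigma)=\sigma$. (Alternatively, $\phi(\sigma)=\sigma\star_{\alpha}\sigma$, and one may invoke the fact noted earlier in this section that on $E$ the operation $\star_{\alpha}$ restricts to the product of $\PBd_{n}$, under which $E$ is a rectangular band.)

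For condition~(3), the key point is that $\sigma\alpha$ and $\alpha\tau$ lie in $E$ (as $\rank(\alpha)=0$) and are therefore idempotent in $\PBd_{n}$, so $\sigma\alpha\sigma\alpha=\sigma\alpha$ and $\alpha\tau\alpha\tau=\alpha\tau$. Using only these two identities and associativity in $\PBd_{n}$,
\[
\phi(\sigma)\star_{\alpha}\phi(\tau)=(\sigma\alpha\sigma)\alpha(\tau\alpha\tau)=(\sigma\alpha\sigma\alpha)(\tau\alpha\tau)=(\sigma\alpha)(\tau\alpha\tau)=\sigma(\alpha\tau\alpha\tau)=\sigma\alpha\tau=\sigma\star_{\alpha}\tau,
\]
which is exactly what is required; hence $\PBd^{\alpha}_{n}$ is the inflation of $E=\Reg(\PBd^{\alpha}_{n})$ along $\phi$. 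I expect the only real content to be pinning down the structural observation cleanly enough that closure, idempotency and $\phi|_{E}=\id$ all read off from it; the rest is the short computation above, so there is no genuine obstacle beyond careful bookkeeping.
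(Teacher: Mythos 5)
Your proposal is correct and follows essentially the same route as the paper: both verify the inflation conditions directly, with the decisive step being the collapse of $(\sigma\alpha\sigma)\alpha(\tau\alpha\tau)$ to $\sigma\alpha\tau$ via identities that hold because $\rank(\alpha)=0$. The only cosmetic difference is that you collapse the middle using idempotency of $\sigma\alpha$ and $\alpha\tau$ (justified carefully from the product-diagram description), whereas the paper uses the equivalent ``easily seen'' identities $\sigma\alpha\sigma=\sigma$ for $\sigma\in\Pp^{\alpha}$ and $\alpha\tau\alpha=\alpha$ for all $\tau$.
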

\begin{proof}
    From the discussion following Hypothesis \ref{h:rank0} we have (\ref{e:regPart}) and it follows that $\Pp^{\alpha}$ is a subsemigroup of $\PBd^{\alpha}_{n}$. Furthermore, since $\rank(\alpha)=0$, it is easily seen that $\sigma\alpha\sigma=\sigma$ for any $\sigma\in\Pp^{\alpha}$, and $\alpha\tau\alpha=\alpha$ for any $\tau\in\PBd_{n}$.
    Thus, for $\sigma,\tau\in\PBd_{n}$ we have
    \begin{equation*}
        \phi(\sigma)\star_{\sigma}\phi(\tau)=
        (\sigma\alpha\sigma)\alpha(\tau\alpha\tau)
        =\sigma(\alpha\sigma\alpha\tau\alpha)\tau
        =\sigma\alpha\tau=\sigma\star_{\alpha}\tau.\qedhere
    \end{equation*}
\end{proof}

It is easily seen that any isomorphism of variants preserves the sizes of the pre-images under $\phi$ of the elements of $\Pp^{\alpha}$. Thus, if variants $\PBd^{\alpha}_{n}$ and $\PBd^{\beta}_{n}$ have different sets, counted with multiplicities, of sizes of pre-images under $\phi_{\alpha}:\xi\mapsto\xi\star_{\alpha}\xi$ and $\phi_{\beta}:\xi\mapsto \xi\star_{\beta}\xi$, respectively, of the set $\Pp^{\alpha}=\Pp^{\beta}$, they are non-isomorphic. However, the reverse implication does not hold. Namely, in Example \ref{ex:kraj}, both $\PBd^{\alpha}_{n}$ and $\PBd^{\beta}_{n}$ have the same set of sizes of the pre-images under $\phi_{\alpha}$ and $\phi_{\beta}$, respectively, $\{1,1,3,5\}$, but are proved to be non-isomorphic.

\section*{Acknowledgements}

This work was supported by the Serbian Ministry of Education, Science and
Technological Development through the Mathematical Institute of the Serbian
Academy of Sciences and Arts.
The author would like to thank the referee for their suggestions, which greatly improved the readability of the paper.
Additionally, the author thanks Prof.\ James East for his immensely helpful comments, and Prof.\ Volodymyr Mazorchuk, for suggesting Proposition \ref{p:Walter}.

\bibliography{refs1}
\bibliographystyle{plain}

\end{document}